\newtheorem{proposition}{Proposition}[section] 
\newtheorem{assumption}[proposition]{Assumption}
\newtheorem{thm}[proposition]{Theorem}
\newtheorem{lemma}[proposition]{Lemma}
\newtheorem{remark}[proposition]{Remark}
\newtheorem{notation}[proposition]{Notation}
\newcommand{\N}{\ensuremath{{\mathbb N}}}
\newcommand{\R}{\ensuremath{{\mathbb R}}}
\newcommand{\e}{\ensuremath{{\rm e}}}
\newcommand{\E}{\ensuremath{{\mathbb E}}}
\newcommand{\Pro}{\ensuremath{{\mathbb P}}}
\DeclareMathOperator*{\LambertW}{LambertW}
\newcommand{\ov}[1]{\overline{#1}}
\newcommand{\un}[1]{\underline{#1}}
\begin{document}
\title{Impulse control and expected suprema}
\author{S\"oren Christensen\thanks{Chalmers University of Technology and the University of Gothenburg, Department of Mathematical Sciences,
SE-412 96 Gothenburg, Sweden, email: sorenc@chalmers.se}
, Paavo Salminen\thanks{\AA bo Akademi, Faculty of Science and Engineering, \small FIN-20500 \AA bo, Finland, email: phsalmin@abo.fi}}
\date{\today}
\maketitle

\begin{abstract}
We consider a class of impulse control problems for general underlying strong Markov processes on the real line, which allows for an explicit solution. The optimal impulse times are shown to be of threshold type and the optimal threshold is characterized as a solution of a (typically nonlinear) equation. The main ingredient we use is a representation result for excessive functions in terms of expected suprema. 
\end{abstract}

\textbf{Keywords:} impulse control; Hunt processes; threshold rules; excessive functions; threshold rules\vspace{.8cm}

\textbf{Subject Classifications:} 49N25, 60G40

\section{Introduction}

Impulse control problems form an important class of stochastic control problems and find applications in a wide variety of fields ranging from finance, e.g. cash management and portfolio optimization, see \cite{korn1999} and \cite{irlesass2006}, optimal forest management, see \cite{Willassen98}, \cite{A04} and the references therein, and control of an exchange rate by the Central Bank, see \cite{MOk}, \cite{CZ}. The general theory for impulse control problems is often based on the seminal work \cite{BLi}, where the problem is treated using {quasi-variational inequalities}, see also \cite{korn1999} for a survey with focus on financial applications. For a treatment based on the use of superharmonic functions in a general Markovian framework, we refer to \cite{C13impulse}. However, the class of problems that allow for an explicit solution is very limited. Even for underlying one-dimensional diffusion processes, general solutions are only known for subclasses of problems, see \cite{Alvarez04}, \cite{AL}, and \cite{Egami08}. In these references,
 solution methods based on excessive functions were established under assumptions on the reward structure that forces the optimal strategy to be in a certain class. For processes with jumps, the class of explicitly solvable problems is even more scarce. In the monograph \cite{OkSu}, a general verification theorem for jump diffusions and a connection to sequences of optimal stopping problems is established. Some examples are discussed therein for general classes of jump processes and particular reward structures arising in the control of exchange rates and optimal stream of dividends under transaction costs, which allow for a solution using the guess-and-verify-approach. 

On the other hand, the optimal stopping theory for general L\'evy processes was developed in the last decade starting with a treatment of the perpetual American put optimal stopping problem in \cite{Mo}, see also \cite{AK}. Another source was the treatment of the  Novikov-Shiryayev optimal stopping problem 
\[\sup_\tau\E_x\left(e^{-r\tau}\left({X_\tau^+}\right)^m\right),\]
where $X$ is a general L\'evy process and $m$ is a positive constant. The main tools for the treatment are the Wiener-Hopf factorization and, based on this, the use of Appell polynomials associated with the running maximum of $X$ up to an independent exponentially-distributed time, see \cite{NS, KyS, NS2, Sa}. Inspired by these findings, solution techniques were developed in this setting for more general reward functions $g$, see \cite{Su,DLU}. In \cite{CST}, this approach was developed for general underlying Markov processes $X$ on the real line. The starting point is to represent the reward function $g$ in the form
\begin{align}\label{eq:repr_sup}
g(x)=\E_x(f(M_T)),
\end{align}
for some function $f$; here and in the sequal, $M$ denotes the running maximum of $X$ and $T$\ is an independent $Exp(r)$-distributed time. For example, if $g(x)=x^m, m=1,2,...,$ and $X$ a L\'evy process, then $f$ is the $m$-th Appell polynomial associated with $M_T$.

The aim of this paper is to treat in a similar manner the impulse control problem for general underlying Markov processes on the real line and general reward function $g$ (fulfilling certain conditions discussed below) with value function
\begin{align}\label{eq:value_fct}
v(x)=\sup_{S=(\tau_n,\gamma_n)_{n\in\N}}\E_x^S\left(\sum_{n=1}^\infty e^{-r\tau_n}g(X_{\tau_n,-})\right),
\end{align}
where the supremum is taken over all impulse control strategies $S=(\tau_n,\gamma_n)_{n=1}^\infty$ such that the process is restarted at a fixed point $\gamma_n=x_0$ after an impulse is exercised and $X_{\tau_n,-}$ denotes the value before the $n$-th impulse is exercised. A more detailed description is given in the following section. We show that the optimal strategy is of threshold type, where the threshold $x^*$ can be described (semi-)explicitly in terms of certain $c$-values of the function $f$, i.e., $f(x^*)=c$, in the representation \eqref{eq:repr_sup}.

Value functions of the form \eqref{eq:value_fct} arise naturally in many applications. One motivation stems from optimal harvesting problems. Here, the process $X$ is interpreted as, e.g., the random volume of wood in a forest stand. At the intervention points $\tau_n$, the forest is harvested to a base level $x_0$ (we consider w.l.o.g. $x_0=0$ in the following). Selling the wood yields the gross reward $g(X_{\tau_n,-})-g(x_0)$. Here, $g(x_0)\leq 0$ can be interpreted as a fixed cost component -- the costs of one intervention -- so that the net reward is given by $g(X_{\tau_n,-})-g(x_0)+g(x_0)=g(X_{\tau_n,-})$ and the value function above describes the maximal expected discounted reward in this setting. The associated control problem is known as (a stochastic version of) Faustmann's problem. We refer to \cite{A04} for further discussions and references. 

Let us mention that in our analysis below, the case $g(x_0)=0$ is included, i.e. no fixed cost component may be present. This situation is typically not considered in literature on impulse control problems. {The reason is that one expects control strategies with infinite activity on finite time intervals to have higher reward than impulse control strategies if no fixed costs have to be paid for the controls.} Nonetheless, as we will see below, also in this case strategies of impulse control-type turn out to be optimal if the gain function $g$ grows slow enough around $x_0$. 

Following the standard literature on Faustmann's problems, we assume to have a fixed base level $x_0$. In other classes of problems, it seems to be more natural to allow the decision maker to choose this base level also. As we do not want to overload this paper, we do not go into details here. Nonetheless, let us mention that the theory developed here can be used as a building block for the solution of this class of problems also. 

Furthermore, many results in the standard literature on impulse control theory are formulated for rewards including a running cost/reward component of the form
\[\E_x^S\left(\int_0^\infty e^{-rs}k(X_s)ds\right).\]
Note that -- at least in the case that $k$ fulfills a suitable integrability condition -- this is no generalization at all as the problem can be reduced to the form \eqref{eq:value_fct} using the $r$-resolvent operator for the uncontrolled process, see, e.g., \cite{C13impulse}, Lemma 2.1.\\

The structure of the paper is as follows. After summarizing some facts about Hunt processes, an exact description of the problem is given in Subsection \ref{subsec:impulse}. 
The main theoretical findings of this paper are given in Section \ref{sec:sol}. In Subsection \ref{subsec:degen}, we first characterize situations where no optimal strategies exist (in the class of impulse control strategies) and give $\epsilon$-optimal strategies in this case. The non-degenerated case is then treated in Subsection \ref{subsec:non-deg}, where the solution is given under general conditions and Assumption \ref{eq:Mv_v} introduced therein. The validity of this assumption is then discussed for certain classes of processes in Subsection \ref{subsec:cond_6}.
The results are illustrated on different examples for L\'evy processes and reflected Brownian motions in Section \ref{sec:example}.

\section{Preliminaries}

\subsection{Hunt processes}\label{subsec:hunt}
For this paper, we consider a general Markovian framework. More precisely, we assume the underlying process $X=(X_t)_{t\geq
  0}$ to be a Hunt process (with infinite lifetime) on a subset $E$ of the real line ${\R}$. 
	We let $(\mathcal{F}_t)_{t\geq0}$ denote the natural filtration generated by $X$ and $\Pro_x$ and $\E_x$ the probability measure and the expectation, respectively, associated with $X$ when started at $x\in E$. 
In \cite{MoSa}, \cite{Cisse2012}, and \cite{CST}, this class of processes is discussed in the context of optimal stopping.
A more detailed treatment is given in 
 \cite{BG},
 \cite{CW}, and 
  \cite{Sh}. Hunt processes have the following important regularity properties:
they are quasi left continuous strong Markov processes with right
continuous sample paths having left limits. \\

Throughout the paper, the notation
$T$ is used for an exponentially distributed random variable assumed
to be independent of $X$ and having the mean $1/r$, where $r>0$ is the discounting parameter 
for the problem at hand. 
A key result in our approach is the following representation result for excessive functions of $X$, 
 see, e.g., \cite{FK}.

\begin{lemma}\label{lem:sup_exc}
Let $f:E\rightarrow \R_+\cup\{+\infty\}$ be an upper semicontinuous function and define
\[u(x):=\mathbb{E}_x\big(\sup_{0\leq t\leq T}f(X_t)\big).\]
 Then the function
$u: E\rightarrow \mathbb{R}\cup\{+\infty\}$ is $r$-excessive.
\end{lemma}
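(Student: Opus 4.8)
The plan is to verify directly the two defining properties of an $r$-excessive function: the supermedian inequality $e^{-rt}P_tu\le u$ for every $t\ge0$, and the pointwise convergence $\lim_{t\downarrow0}e^{-rt}P_tu=u$, where $P_t$ denotes the transition semigroup of $X$. The whole argument rests on two structural facts: the Markov property of $X$ at a fixed deterministic time, and the lack of memory of the exponential time $T$, which makes the discount factor $e^{-rt}$ reappear as $\Pro(T>t)$. Throughout I abbreviate $\Phi^a_b:=\sup_{a\le s\le b}f(X_s)$, so that $u(x)=\E_x(\Phi^0_T)$, and I note first that $\Phi^0_T$ is a genuine (nonnegative, possibly infinite) random variable: upper semicontinuity of $f$ together with the right-continuity of the paths of $X$ guarantees its measurability, so $u$ is well defined and measurable.

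First I would establish the supermedian inequality. Fix $\delta>0$ and apply the Markov property at time $\delta$: conditionally on $\mathcal{F}_\delta$, the shifted supremum $\sup_{\delta\le s\le\delta+T'}f(X_s)$ (with $T'$ an independent copy of $T$) has conditional expectation $u(X_\delta)$, so that $e^{-r\delta}P_\delta u(x)=e^{-r\delta}\E_x(\Phi^\delta_{\delta+T'})$. Using $e^{-r\delta}=\Pro(T>\delta)$ and the fact that, conditionally on $\{T>\delta\}$, the overshoot $T-\delta$ is again $Exp(r)$-distributed and independent of $X$, this rewrites as
\[
e^{-r\delta}P_\delta u(x)=\E_x\big(\mathbf{1}_{\{T>\delta\}}\,\Phi^\delta_T\big).
\]
Since $f\ge0$ and $\{s:\delta\le s\le T\}\subseteq\{s:0\le s\le T\}$ on $\{T>\delta\}$, the integrand is bounded above by $\Phi^0_T$, whence $e^{-r\delta}P_\delta u(x)\le\E_x(\Phi^0_T)=u(x)$.

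It remains to prove $e^{-r\delta}P_\delta u(x)\to u(x)$ as $\delta\downarrow0$. Writing out the difference from the identity above,
\[
u(x)-e^{-r\delta}P_\delta u(x)=\E_x\big(\mathbf{1}_{\{T\le\delta\}}\Phi^0_T\big)+\E_x\big(\mathbf{1}_{\{T>\delta\}}(\Phi^0_T-\Phi^\delta_T)\big),
\]
and I would show both terms vanish. The first term tends to $0$ because $\mathbf{1}_{\{T\le\delta\}}\downarrow0$, by dominated convergence when $u(x)<\infty$ and by a direct monotone argument otherwise. For the second term one uses $\Phi^0_T-\Phi^\delta_T=(\Phi^0_\delta-\Phi^\delta_T)^+$ and lets $\delta\downarrow0$. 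This is the delicate, and I expect the main, step: one must control the path of $X$ immediately after the start. Right-continuity of the paths together with the semicontinuity of $f$ forces $\Phi^0_\delta=\sup_{0\le s\le\delta}f(X_s)\to f(x)$ and $\Phi^\delta_T\uparrow\sup_{0<s\le T}f(X_s)$, and the crux is to rule out that the path instantaneously moves to values of $f$ strictly below $f(x)$, i.e.\ to see that this latter limit still dominates $f(x)$, so that $(\Phi^0_\delta-\Phi^\delta_T)^+\to0$ pointwise; a further application of dominated convergence then gives the claim. The regularity properties of the Hunt process $X$ recorded above (quasi-left-continuity and c\`adl\`ag paths) are exactly what is needed to carry out this near-origin control, and this is where I would expect the technical weight of the proof to lie.
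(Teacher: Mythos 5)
Your first half (the supermedian inequality) is correct, and your overall two-step plan is the natural one; note that the paper itself gives no argument at all for this lemma but simply defers to \cite{CST}, Lemma 2.2. The problem is in the second half. You correctly reduce the limit property $e^{-r\delta}P_\delta u(x)\to u(x)$ to the pointwise statement
\[
\sup_{0<t\le T}f(X_t)\;\ge\; f(x)\qquad \Pro_x\text{-a.s.},
\]
i.e.\ to $(\Phi^0_\delta-\Phi^\delta_T)^+\to 0$, and you explicitly flag this as the crux --- but you never prove it, and the tools you invoke cannot prove it. Upper semicontinuity together with right-continuity of the paths yields $\limsup_{t\downarrow 0}f(X_t)\le f(x)$, which is the \emph{wrong} direction: it is exactly what you need for $\Phi^0_\delta\downarrow f(x)$ (that part is fine), but it gives no lower bound whatsoever on $\sup_{0<t\le T}f(X_t)$. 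Quasi-left-continuity is also of no help here: it concerns left limits along increasing sequences of stopping times and says nothing about the path immediately after time $0$.

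Moreover, this gap cannot be closed from the stated hypotheses, because the missing inequality can fail. Take $X_t=x+t$ (deterministic uniform motion, a perfectly good Hunt process) and $f=\mathbf{1}_{\{0\}}$, which is nonnegative and upper semicontinuous. Then $u(0)=\E_0\big(\sup_{0\le t\le T}f(X_t)\big)=1$ (the supremum is attained at $t=0$), while $u(y)=0$ for $y>0$, so $e^{-rt}P_tu(0)=0$ for every $t>0$ and the limit property fails at $x=0$: $u$ is supermedian but not $r$-excessive. So any complete proof must use some input beyond u.s.c.\ plus c\`adl\`ag paths: for instance continuity of $f$ (then $f(X_t)\to f(x)$ as $t\downarrow 0$ and your crux is immediate), or a regularity condition on the process ensuring that the starting point is a.s.\ approached at strictly positive times through the set $\{f\ge f(x)\}$. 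In the paper's actual applications the representing functions inside the supremum are continuous, so the issue is invisible there, but your proof as written --- and indeed the lemma in the bare generality you are aiming at --- hinges on precisely this missing hypothesis; you should either add it or trace how the standing assumptions of the cited reference supply it. (A secondary, fixable point: your dominated-convergence step presupposes $u(x)<\infty$; it is cleaner to observe that $e^{-r\delta}P_\delta u(x)=\E_x\big(\mathbf{1}_{\{T>\delta\}}\Phi^\delta_T\big)$ is monotone as $\delta\downarrow 0$ and use monotone convergence, which treats the finite and infinite cases simultaneously.)
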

\begin{proof}
See \cite{CST}, Lemma 2.2.
\end{proof}

The connection between the running maximum of the process $X$ and the first passage times is given by the following result, which corresponds to the well-known fluctuation identities for L\'evy processes as used, e.g., in \cite{NS}, \cite{kyp}, and \cite{KyS}. 
\begin{lemma}\label{lem:M_tau}
Let $f$ and $g$ be real functions such that for all $x$
\begin{align*}
g(x)=\E_x(f(M_T)),
\end{align*}
where $M$ denotes the running maximum of $X$. Furthermore, let $y\in \R$ and
\[\tau_y=\inf\{t\geq 0:X_t\geq y\}.\]
Then, whenever the expectations exist,
\begin{align}\label{eq:harm_threshold}
\E_x\left(f(M_T);M_T\geq y\right)=\E_x(\e^{-r\tau_y}g(X_{\tau_y})).
\end{align}
Here and in the following, we always skip the indicator $1_{\{{\tau_y}<\infty\}}$.
\end{lemma}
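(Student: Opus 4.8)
The plan is to prove the identity by decomposing the event $\{M_T \geq y\}$ according to the first passage time $\tau_y$ and exploiting the strong Markov property together with the memorylessness of the exponential time $T$. The key observation is that the running maximum $M_T$ exceeds $y$ if and only if the process reaches level $y$ before the independent exponential clock $T$ rings, i.e. $\{M_T \geq y\} = \{\tau_y \leq T\}$ up to a null set, using right-continuity of paths and the definition of $\tau_y$ as a first passage to a closed half-line.

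First I would condition on $\mathcal{F}_{\tau_y}$ on the event $\{\tau_y < \infty\}$. On $\{M_T \geq y\}$ the process has passed $y$ at time $\tau_y \leq T$, and by the lack-of-memory property of the exponential distribution, the remaining time $T - \tau_y$, conditionally on $\{\tau_y \leq T\}$ and on $\mathcal{F}_{\tau_y}$, is again $Exp(r)$-distributed and independent of the past. The crucial point is that the running maximum over $[0,T]$ can be written, on this event, in terms of the position $X_{\tau_y}$ at the passage time and the running maximum of the shifted process over the residual exponential horizon. Since the maximum up to $\tau_y$ is at most $y$ (strictly before passage) while the post-$\tau_y$ maximum dominates $X_{\tau_y} \geq y$, the overall maximum $M_T$ coincides with the running maximum of the process restarted at $X_{\tau_y}$ over an independent $Exp(r)$ time.

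Carrying this out, I would apply the strong Markov property at $\tau_y$ to obtain
\begin{align*}
\E_x\left(f(M_T); M_T \geq y\right) = \E_x\left(\e^{-r\tau_y}\, \E_{X_{\tau_y}}\!\big(f(M_T)\big)\right),
\end{align*}
where the factor $\e^{-r\tau_y}$ arises precisely as $\Pro(T \geq \tau_y \mid \mathcal{F}_{\tau_y}) = \e^{-r\tau_y}$ from the independence and exponential distribution of $T$. Recognizing the inner expectation as $g$ evaluated at $X_{\tau_y}$ via the defining hypothesis $g(z) = \E_z(f(M_T))$, the right-hand side becomes $\E_x\!\big(\e^{-r\tau_y} g(X_{\tau_y})\big)$, which is the claimed identity \eqref{eq:harm_threshold}.

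The main obstacle will be the careful justification that the post-passage running maximum genuinely equals the maximum of the restarted process over the residual exponential time, and that no contribution from the maximum before $\tau_y$ is lost. This hinges on the fact that on $\{M_T \geq y\}$ one has $X_{\tau_y} \geq y$ (with equality typically failing for processes with jumps, where the level is overshot), so the pre-passage supremum, being bounded by $y \leq X_{\tau_y}$, does not affect the overall maximum once the process has passed $y$. One must also handle the regularity issues underlying the identity $\{M_T \geq y\} = \{\tau_y \leq T\}$ and the measurability of the random variables involved, which is where the Hunt-process assumptions (right-continuity of paths, quasi-left-continuity, and the strong Markov property) do the essential work. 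The stated caveat that the indicator $1_{\{\tau_y < \infty\}}$ is suppressed throughout reflects that on $\{\tau_y = \infty\}$ both sides vanish, so the argument is confined to the event of finite passage.
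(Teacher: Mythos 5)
Your proposal is correct and follows essentially the same route as the paper: both identify $\{M_T\geq y\}=\{\tau_y\leq T\}$, note that on this event $M_T$ coincides with the post-$\tau_y$ running maximum since the pre-passage supremum is dominated by $y\leq X_{\tau_y}$, and then combine the strong Markov property at $\tau_y$ with the memorylessness of $T$ to produce the factor $\e^{-r\tau_y}$ and the inner expectation $\E_{X_{\tau_y}}(f(M_T))=g(X_{\tau_y})$. The paper merely writes out explicitly, via Fubini against the law of $T$, the conditioning step you describe in prose; the displayed identity in your argument is exactly the penultimate line of the paper's computation.
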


\begin{proof}
Note that by the memorylessness property of the exponential distribution
\begin{align*}
\E_x\left(f(M_T);M_T\geq y\right)=&\E_x\left(f(M_T);\tau_y\leq T\right)\\=&\E_x({f}(\sup_{\tau_y\leq t\leq T}X_t);\tau_y\leq T)\\
=&\int_0^\infty \E_x\left(\E_x\left({f}(\sup_{\tau_y\leq t\leq s}X_t)\Big|\mathcal{F}_{\tau_y}\right){1}_{\{\tau_y\leq s\}}\right)\Pro(T\in ds)\\
=&\E_x\left(\E_{X_{\tau_y}}\left(\int_0^\infty{f}(\sup_{0\leq t\leq s}X_t)\Pro(T\in ds)\right)e^{-r\tau_y}\right)\\
=&\E_x(e^{-r\tau_y}\E_{X_{\tau_y}}({f}(M_T)))\\
=&\E_x(\e^{-r\tau_y}g(X_{\tau_y})).
\end{align*}
\end{proof}

\subsection{General impulse control problem}\label{subsec:impulse}

For general Hunt processes on\ $E\subseteq \R$, the exact definition of the right setting for treating impulse control problems including all technicalities is lengthy. For our considerations in this paper, we only explain the objects on an intuitive level, which is sufficient for our further considerations. For an exact treatment, we refer the interested reader to  the appendix in \cite{C13impulse} and the references given there. 

The object is to maximize over {impulse control strategies}, which are sequences $S=(\tau_n,\gamma_n)_{n=1}^\infty$ of times and impulses, respectively. Under the associated family of measures $(\Pro^S_x)_{x\in E}$, the process $X$ is still a strong Markov process, where between each two random times $\tau_{n-1}<\tau_n$, the process runs uncontrolled with the same dynamics as the original Hunt process. At each random time $\tau_n$, an impulse is exercised and the process is restarted at the new random state $X_{\tau_n}=\gamma_n$. It is assumed that $\tau_n$ is a stopping time for the process with only $n-1$ controls and $\gamma_n$ is a random variable measurable with respect to the corresponding pre-$\tau_n$ $\sigma$-algebra. Furthermore, we only consider admissible impulse control strategies such that $\tau_n\nearrow\infty$ as $n\nearrow\infty$.  

Since the underlying process may have jumps, we have to distinguish jumps coming from the dynamics of the process from jumps that arise due to a control, which may take place at the same time. Hence we have $X^n_{\tau_n}\not=X_{\tau_n-}$ in general, where  $X^n$ denotes the process with only $n-1$ controls. For our further developments, it will be important to consider the process $X^n$ also at time point $\tau_n$. Therefore, we write
\[X_{\tau_n,-}:=X^n_{\tau_n}\]
to denote the value of the process at $\tau_n$ if no control would have been exercised. For continuous underlying processes as diffusion processes, we have $X_{\tau_n,-}=X_{\tau_n-}$, which motivates this notation.

To state the class of problems we are interested in and to fix ideas we assume that $0\in E$. The decision maker can restart the process process at $x_0=0$ coming from a positive state and no other actions are allowed. This means that $\gamma_n=0$ and $X_{\tau_n-}>0$ for all\ $n$.
Furthermore, we fix a continuous function $g:[0,\infty)\cap E \rightarrow \R$ which is bounded from below. For each control from $x$ to 0, we get a reward $g(x)$, i.e. we consider the impulse control problem with value function
\begin{align}\label{eq:problem}\tag{ICP}
v(x)=\sup_{S=(\tau_n,\gamma_n)_{n\in\N}}\E_x^S\left(\sum_{n=1}^\infty e^{-r\tau_n}g(X_{\tau_n,-})\right),
\end{align}
and we assume that for all $S$ and $x\in E$
\begin{equation}\label{eq:finite}
\E_x^S\left(\sum_{n=1}^\infty e^{-r\tau_n}g(X_{\tau_n,-})\right)<\infty.
\end{equation}

%

In an even much more general setting, a verification theorem for treating these problems is given in \cite{C13impulse}, Proposition 2.2, which in our situation reads as follows. For the sake of completeness, we also include the proof.

\begin{thm}\label{prop:h_maj} Let $\hat{v}:E\rightarrow \R$ be measurable and define the maximum operator ${\bf M}$ by 
\[{\bf M}\hat{v}(x)=g(x)+\hat{v}(0)\mbox{ for }x\geq0,\;{\bf M}\hat{v}(x)=-\infty\mbox{ for }x<0\]
\begin{enumerate}[(i)]
\item If $\hat{v}$ is nonnegative, $r$-excessive for the underlying uncontrolled process, and ${\bf M}\hat{v}(x)\leq \hat{v}(x)$ for all\ $x$, then it holds that
\[v(x)\leq \hat{v}(x)\mbox{  for all\ $x$}.\] 
\item\label{item:h_maj2} If $x\in E$ and $S=(\tau_n,\gamma_n)_{n=1}^\infty$ is an admissible impulse control strategy such that 
\begin{equation}\label{eq:harmonic}
\E_x^S\left(e^{-r\tau_n}\hat{v}(X_{\tau_n,-})\right)=\E_x^S\left(e^{-r\tau_{n-1}}\right)\hat{v}(0)\mbox{ for all }n\in\N,
\end{equation}
\[\hat{v}(0)+g(X_{\tau_n,-})\geq \hat{v}(X_{\tau_n,-})\;\;\;\Pro_x^S-a.s.,\]
then
\[\hat{v}(x)\leq v(x).\]
\end{enumerate}
\end{thm}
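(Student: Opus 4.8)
The plan is the classical verification (telescoping) argument driven by the supermartingale property of $r$-excessive functions; parts (i) and (ii) are just the two inequality directions of one and the same computation. Throughout I abbreviate $a_n:=\E_x^S(e^{-r\tau_n})$, with the convention $a_0=1$.

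For (i), fix any admissible $S$. Since $\hat v$ is nonnegative and $r$-excessive, $t\mapsto e^{-rt}\hat v(X_t)$ is a nonnegative right-continuous supermartingale for the uncontrolled dynamics; applying optional sampling on each uncontrolled block $[\tau_{n-1},\tau_n]$ (and using the restart $X_{\tau_{n-1}}=0$ for $n\ge2$, while $X_0=x$) gives $\E_x^S(e^{-r\tau_1}\hat v(X_{\tau_1,-}))\le\hat v(x)$ and $\E_x^S(e^{-r\tau_n}\hat v(X_{\tau_n,-}))\le\hat v(0)\,a_{n-1}$ for $n\ge2$. The hypothesis ${\bf M}\hat v\le\hat v$ reads $g(y)\le\hat v(y)-\hat v(0)$ on $[0,\infty)\cap E$, so $\E_x^S(e^{-r\tau_n}g(X_{\tau_n,-}))\le\E_x^S(e^{-r\tau_n}\hat v(X_{\tau_n,-}))-\hat v(0)a_n$. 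Combining the two estimates makes the right-hand sides telescope, and summing over $n=1,\dots,N$ collapses to $\hat v(x)-\hat v(0)\,a_N$. Because $S$ is admissible, $\tau_N\nearrow\infty$ forces $a_N\to0$, and $\hat v(0)\ge0$, so every partial sum is $\le\hat v(x)$; letting $N\to\infty$ and taking the supremum over $S$ yields $v(x)\le\hat v(x)$.

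For (ii) the function $\hat v$ is no longer assumed excessive, so the supermartingale inequality is unavailable; instead the harmonic identity \eqref{eq:harmonic} supplies exactly the equality version of the first estimate above (its $n=1$ instance encoding the starting value $\hat v(x)$), while $\hat v(0)+g\ge\hat v$ along $S$ supplies the reverse of the maximum-operator bound. Feeding these equalities into the same telescoping identity gives $\sum_{n=1}^N\E_x^S(e^{-r\tau_n}g(X_{\tau_n,-}))=\hat v(x)-\hat v(0)\,a_N$, and letting $N\to\infty$ produces $\E_x^S(\sum_n e^{-r\tau_n}g(X_{\tau_n,-}))\ge\hat v(x)$, whence $v(x)\ge\hat v(x)$.

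The two genuine obstacles are analytic rather than structural. First, the optional-sampling step in (i) must be justified for the merely $r$-excessive $\hat v$ at the (possibly unbounded) stopping times $\tau_n$: one invokes the standard fact that $r$-excessive functions of a Hunt process generate right-continuous nonnegative supermartingales, samples at $\tau_n\wedge k$, and passes $k\to\infty$ by Fatou using $\hat v\ge0$. Second, passing from the partial sums to the infinite series requires interchanging limit, sum and expectation; this is where the standing finiteness assumption \eqref{eq:finite}, together with $g$ being bounded from below (so that the negative part of the series is controlled by a constant multiple of $\sum_n e^{-r\tau_n}$) and $\hat v(0)a_N\to0$, let monotone/dominated convergence go through. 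I expect this convergence bookkeeping, together with the precise handling of the boundary term at $n=1$, to be the only delicate points.
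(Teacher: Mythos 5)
Your proof is correct and follows essentially the same route as the paper's: optional sampling of the nonnegative right-continuous supermartingale $t\mapsto e^{-rt}\hat v(X_t)$ on each uncontrolled block, the inequality ${\bf M}\hat v\le \hat v$ (or its reverse along $S$ in part (ii)) to pass between $g$ and the increments of $\hat v$, and telescoping; the only difference is presentational, in that you telescope term-wise expectations of finite partial sums and then let $N\to\infty$, whereas the paper keeps the entire series inside a single expectation (its display \eqref{eq:harm_appl}), and your bookkeeping of the limit interchange is, if anything, more explicit than the paper's.

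One caveat deserves to be made explicit. In (ii) you read the $n=1$ instance of \eqref{eq:harmonic} as prescribing the starting value $\hat v(x)$; but as literally written (with $\tau_0=0$, hence $\E_x^S(e^{-r\tau_0})=1$) it prescribes $\E_x^S\left(e^{-r\tau_1}\hat v(X_{\tau_1,-})\right)=\hat v(0)$, and feeding that into the telescoping argument yields only $v(x)\ge\hat v(0)$, not $v(x)\ge\hat v(x)$. Your reading --- equality in the block-wise optional sampling estimates, i.e. $\E_x^S\left(e^{-r\tau_n}\hat v(X_{\tau_n,-})\right)=\E_x^S\left(e^{-r\tau_{n-1}}\hat v(X_{\tau_{n-1}})\right)$ with $X_{\tau_0}=x$ --- is exactly the one under which the paper's own claim that \eqref{eq:harmonic} forces equality in \eqref{eq:harm_appl} is true, and it is also what the later application requires (in the proof of Theorem \ref{thm:nondegenerated}, harmonicity gives $\E_x(e^{-r\tau_1}\hat v(X_{\tau_1,-}))=\hat v(x)$, which contradicts the literal $n=1$ condition unless $\hat v(x)=\hat v(0)$). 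So the discrepancy is a slip in the theorem's statement rather than a gap in your argument, but a careful write-up should state the corrected form of \eqref{eq:harmonic} before using it.
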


\begin{proof}
Let $S=(\tau_n,\gamma_n)_{n}$ be an arbitrary admissible impulse control strategy.
If $\hat v$ is $r$-excessive, by the optional sampling theorem for nonnegative supermartingales we obtain (keeping in mind that $X$ runs uncontrolled between $\tau_{n-1}$ and $\tau_n$ under $\Pro^S$)
\begin{align*}\E_x^S\left(e^{-r\tau_n}\hat v(X_{\tau_n,-})-e^{-r\tau_{n-1}}\hat v(X_{\tau_{n-1}})\right)\leq 0,\end{align*}
{hence
\begin{align}
&\E_x^S\left(\sum_{n=1}^\infty e^{-r\tau_n}\big(\hat v(X_{\tau_n})-\hat v(X_{\tau_{n},-})\big)\right)+\hat v(x)\nonumber\\
=&-\E_x^S\left(\sum_{n=1}^\infty \big(e^{-r\tau_n}\hat v(X_{\tau_n,-})-e^{-r\tau_{n-1}}\hat v(X_{\tau_{n-1}})\big)\right)\nonumber\\
\geq& 0.\label{eq:harm_appl}
\end{align}}
Using this inequality we get
\begin{align*}
\E_x^S\left(\sum_{n=1}^\infty  e^{-r\tau_n}g(X_{\tau_n,-})\right)
&\leq\E_x^S\left(\sum_{n=1}^\infty  e^{-r\tau_n}\left(g(X_{\tau_n,-})+\hat v(X_{\tau_n})-\hat v(X_{\tau_n,-})\right)\right)+\hat v(x)\\
&=\E_x^S\left(\sum_{n=1}^\infty  e^{-r\tau_n}\left(g(X_{\tau_n,-})+\hat v(0)-\hat v(X_{\tau_n,-})\right)\right)+\hat v(x)\\
&=\E_x^S\left(\sum_{n=1}^\infty  e^{-r\tau_n}\left({\bf M}\hat{v}(X_{\tau_n,-})-\hat v(X_{\tau_n,-})\right)\right)+\hat v(x),
\end{align*}
where $\hat{v}(X_{\tau_n})=\hat{v}(\gamma_n)=\hat{v}(0)$. Since ${\bf M}\hat{v}(y)\leq \hat{v}(y)$ for all\ $y$ we obtain that 
\begin{align*}
\E_x^S\left(\sum_{n=1}^\infty  e^{-r\tau_n}g(X_{\tau_n,-})\right)\leq \hat v(x),
\end{align*}
and consequently  $v(x)\leq \hat v(x)$ because $S$ is arbitrary. (Note that this implies \eqref{eq:finite} also). \\
{Claim (ii) is proved following the steps in the proof of (i): assumption \eqref{eq:harmonic} guarantees equality in \eqref{eq:harm_appl}. }
\end{proof}

In general, it is hard to find a candidate to apply this verification theorem for obtaining an explicit solution. In the following section, we will demonstrate how the representation result for excessive functions presented in Lemma \ref{lem:sup_exc} can be used to succeed.

\section{{Main results}}\label{sec:sol}
For a continuous reward function $g$, we study now \eqref{eq:problem} with value function
\begin{align*}\label{eq:problem}
v(x)=\sup_{S}\E_x^S\left(\sum_{n=1}^\infty e^{-r\tau_n}g(X_{\tau_n,-})\right)
\end{align*}
as described in Subsection \ref{subsec:impulse}.
%
The main ingredient for our treatment of this problem is a representation of $g$ as given in \eqref{eq:repr_sup}. The existence and explicit determination of such a representation is discussed in detail in \cite{CST}, Subsection 2.2. We will come back to this in Section \ref{sec:example} for examples of interest.

\subsection{Degenerated case}\label{subsec:degen}
We first treat the degenerated case, in which we only find $\epsilon$-optimal impulse control strategies {of the form \textit{Wait until the process reaches level $\epsilon$ and then restart the process at state 0}. Taking the limit (in a suitable sense) as $\epsilon\rightarrow 0$, the controlled process is the process reflected at state 0. This limit is obviously not an admissible impulse control strategy, but a strategy  of singular control type.}

Recall that $M$ denotes the running maximum of $X$ and $T$\ is an independent $Exp(r)$-distributed time.

\begin{thm}\label{thm:degenerated}
Assume that $g(0)=0$ 
and that there exists a non-decreasing function $f:[0,\infty)\cap E \rightarrow \R$, which is continuous in $x=0$, such that
\[g(x)=\E_x\big(f(M_T)\big)\mbox{ for all }x\in [0,\infty)\cap E\] 
and write $\hat f=f-f(0)$. 
Then, the value function is given by 
\[v(x)=\E_x(\hat{f}(M_T);M_T\geq0)
\]
and for $\epsilon>0$ the impulse control strategies $S_\epsilon=(\tau_{n,\epsilon},\gamma_n)_{n=1}^\infty$, where
\[\gamma_n=0,~~\tau_{n,\epsilon}=\inf\{t>\tau_{n-1,\epsilon}:X_t\geq \epsilon\},~\tau_{0,\epsilon}=0.\]
are $\epsilon$-optimal in the sense that
\[v_\epsilon(x):=\E_x^{S_\epsilon}\left(\sum_{n=1}^\infty e^{-r\tau_{n,\epsilon}}g(X_{\tau_{n,\epsilon},-})\right)\rightarrow v(x)\;\;\mbox{ as $\epsilon\rightarrow 0$.}\]
\end{thm}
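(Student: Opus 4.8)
The plan is to prove the theorem in two halves: first establish the upper bound $v(x)\leq \E_x(\hat f(M_T);M_T\geq 0)$ via the verification Theorem \ref{prop:h_maj}(i), and then show that the strategies $S_\epsilon$ asymptotically attain this bound, so that $v_\epsilon(x)\to \E_x(\hat f(M_T);M_T\geq 0)$, forcing equality throughout. Let me set $\hat v(x):=\E_x(\hat f(M_T);M_T\geq 0)$ for $x\in[0,\infty)\cap E$ (and appropriately for $x<0$). Since $\hat f=f-f(0)$ is non-decreasing with $\hat f(0)=0$, we have $\hat f\geq 0$ on $[0,\infty)$, so $\hat v$ is nonnegative; moreover $\hat v(x)=\E_x(\sup_{0\leq t\leq T}\hat f(X_t))$ whenever the process starts in $[0,\infty)$, which by Lemma \ref{lem:sup_exc} is $r$-excessive.

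For the upper bound I would apply Theorem \ref{prop:h_maj}(i) to $\hat v$. The excessivity and nonnegativity are in hand, so the work is the inequality ${\bf M}\hat v(x)\leq \hat v(x)$, i.e. $g(x)+\hat v(0)\leq \hat v(x)$ for $x\geq 0$. Here the continuity of $f$ at $0$ enters to identify $\hat v(0)$: starting at $0$, the running maximum $M_T$ stays at least $0$, and one checks $\hat v(0)=\E_0(\hat f(M_T))=g(0)-f(0)=-f(0)$ using $g(0)=0$. Meanwhile, using Lemma \ref{lem:M_tau} with $y=0$ (so $\tau_0=0$ and $X_{\tau_0}=x$), one gets $\hat v(x)=\E_x(\hat f(M_T);M_T\geq 0)=g(x)-f(0)\E_x(\ldots)$; more precisely, splitting $\hat f=f-f(0)$ gives $\hat v(x)=g(x)-f(0)\,\Pro_x(M_T\geq 0)=g(x)-f(0)$ since $M_T\geq x\geq 0$ almost surely. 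Then ${\bf M}\hat v(x)=g(x)+\hat v(0)=g(x)-f(0)=\hat v(x)$, so the required inequality holds with equality. This yields $v(x)\leq\hat v(x)$ for all $x$.

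For the lower bound I would compute $v_\epsilon(x)$ explicitly for the $\epsilon$-threshold strategy and pass to the limit. Under $S_\epsilon$ the process is restarted at $0$ after each hitting of $\epsilon$, so by the strong Markov property and the geometric structure of the restarts, the expected discounted reward factors as a geometric-type series: writing $\rho_\epsilon:=\E_0(e^{-r\tau_\epsilon})$ for the Laplace transform of the first passage time to $\epsilon$ from $0$, and using that each reward upon hitting is $\E_\cdot(g(X_{\tau_\epsilon}))$, one obtains $v_\epsilon(x)=\E_x(e^{-r\tau_\epsilon}g(X_{\tau_\epsilon}))+\rho_\epsilon\,\E_0(e^{-r\tau_\epsilon}g(X_{\tau_\epsilon}))/(1-\rho_\epsilon)\cdot(\text{suitable renewal factor})$; I would organize this cleanly via Lemma \ref{lem:M_tau} applied at level $y=\epsilon$, rewriting each term $\E_\cdot(e^{-r\tau_\epsilon}g(X_{\tau_\epsilon}))$ as $\E_\cdot(f(M_T);M_T\geq\epsilon)$. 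The key analytic step is then to let $\epsilon\downarrow 0$: by continuity of $f$ at $0$ and monotone/dominated convergence (the integrability \eqref{eq:finite} and the standing expectation assumptions justify the interchange), each $\E_\cdot(f(M_T);M_T\geq\epsilon)\to\E_\cdot(f(M_T);M_T\geq 0)$, and the renewal/geometric factors combine so that the telescoping contribution of the constant $f(0)$ cancels, leaving exactly $\E_x(\hat f(M_T);M_T\geq 0)$.

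The main obstacle I expect is the lower-bound limit computation, specifically controlling the geometric series of restart rewards as $\epsilon\to 0$: the per-cycle discount factor $\rho_\epsilon=\E_0(e^{-r\tau_\epsilon})\to 1$ as $\epsilon\to 0$ (since $\tau_\epsilon\to 0$), so both numerator and denominator of the geometric sum degenerate, and one must show the ratio converges to the correct finite limit rather than blowing up. This is precisely where the normalization $\hat f=f-f(0)$ and the vanishing of $\hat f$ at $0$ are essential — the singular part proportional to $f(0)$ must be shown to telescope away, and the surviving part, built from $\hat f(M_T)$ with $\hat f(0)=0$, stays integrable in the limit. Handling this delicate cancellation, together with justifying the convergence $\E_\cdot(f(M_T);M_T\geq\epsilon)\to\E_\cdot(\hat f(M_T);M_T\geq 0)$ uniformly enough to pass through the series, is the technical heart of the argument. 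Once both bounds match, equality $v=v_\epsilon$-limit$=\hat v$ follows.
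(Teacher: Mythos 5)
Your overall architecture is the same as the paper's: verification via Theorem \ref{prop:h_maj}(i) for the upper bound (and your computation ${\bf M}\hat v(x)=g(x)-f(0)=\hat v(x)$ for $x\geq 0$ is exactly the paper's), and a strong-Markov/renewal computation of $v_\epsilon$ via Lemma \ref{lem:M_tau} for the lower bound. The problem is that the step you yourself flag as ``the technical heart'' --- the degenerate limit of the geometric sum, $v_\epsilon(0)=\E_0(f(M_T);M_T\geq\epsilon)/(1-\Pro_0(M_T\geq\epsilon))$ with both numerator and denominator tending to $0$ --- is left unresolved; you only describe what ``must be shown to telescope away.'' That is a genuine gap, and it cannot be papered over by ``monotone/dominated convergence'' or uniform-in-$\epsilon$ estimates, because the ratio is genuinely of $0/0$ type. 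The resolution is a one-line algebraic flip, not a convergence-rate argument: since $\rho_\epsilon=\Pro_0(M_T\geq\epsilon)$ and $g(0)=\E_0(f(M_T))=0$, the numerator equals $\E_0(f(M_T))-\E_0(f(M_T);M_T<\epsilon)=-\E_0(f(M_T);M_T<\epsilon)$, so
\begin{equation*}
v_\epsilon(0)=-\frac{\E_0\big(f(M_T);M_T<\epsilon\big)}{\Pro_0\big(M_T<\epsilon\big)},
\end{equation*}
i.e.\ $v_\epsilon(0)$ is minus the conditional average of $f(M_T)$ on the event $\{0\leq M_T<\epsilon\}$; continuity of $f$ at $0$ then gives $v_\epsilon(0)\to-f(0)$ immediately. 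Plugging this into $v_\epsilon(x)=\E_x(f(M_T);M_T\geq\epsilon)+v_\epsilon(0)\Pro_x(M_T\geq\epsilon)$ finishes the lower bound. Without this use of the hypothesis $g(0)=0$ inside the ratio, your plan has no mechanism to produce the finite limit.

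Two further slips you should repair. First, as $\epsilon\downarrow0$ the events $\{M_T\geq\epsilon\}$ increase to $\{M_T>0\}$, \emph{not} to $\{M_T\geq 0\}$; your claimed convergence $\E_\cdot(f(M_T);M_T\geq\epsilon)\to\E_\cdot(f(M_T);M_T\geq0)$ is false whenever $\Pro_x(M_T=0)>0$ and $f(0)\neq0$. The limit one actually obtains is $\E_x(f(M_T);M_T>0)-f(0)\Pro_x(M_T>0)=\E_x(\hat f(M_T);M_T>0)$, and only because $\hat f(0)=0$ can the strict inequality be replaced by $M_T\geq0$ --- this is precisely why the theorem is stated in terms of $\hat f$. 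Second, for the excessivity of $\hat v$ you need the representation $\hat v(x)=\E_x\big(\sup_{t\leq T}\hat f(X_t)\big)$ for \emph{all} $x\in E$, not just for starts in $[0,\infty)$, since Theorem \ref{prop:h_maj}(i) requires excessivity on all of $E$; the fix is to extend $\hat f$ by $0$ on $(-\infty,0)$ (it remains non-decreasing and nonnegative), after which Lemma \ref{lem:sup_exc} applies globally and the identity $\hat v(x)=\E_x(\hat f(M_T))=\E_x(\hat f(M_T);M_T\geq0)$ holds everywhere.
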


\begin{figure}
\begin{center}
\includegraphics[height=6cm]{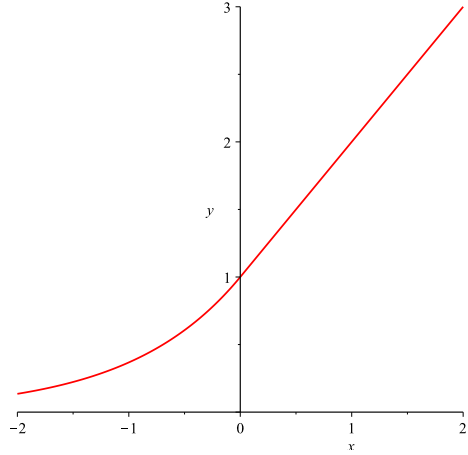}
\caption{Value function in the case $g(x)=x$ for a standard Brownian motion $X$ and $r=1/2$.}\label{fig:v1}
\end{center}
\end{figure}

\begin{proof}
Here and in the following proofs, to simplify notation, we assume that $E=\R$, so that we may write, e.g., $x>0$ to denote $x\in(0,\infty)\cap E$.
Note that, due to the right continuity of the sample paths of $X$, it holds that $\tau_{n,\epsilon}\nearrow\infty$ as $n\nearrow\infty$, so that $S_\epsilon$ is admissible in the sense given above. Writing $\hat f(x)=0$ for $x< 0,$ for short, we have to show that the function
\[\hat v(x):=\E_x\big(\hat f(M_T)\big)\]
is the value function. 
By the monotonicity of $\hat f$
\[\hat v(x)=\E_x\big(\sup_{t\leq T}\hat f(X_t)\big)\]
and by noting that $\hat f\geq 0$, we obtain from Lemma \ref{lem:sup_exc} that $\hat v$ is $r$-excessive. For $x>0$
\begin{align*}
{\bf M}\hat v(x)&=g(x)+\hat v(0)=\E_x\big(f(M_T)\big)+\E_0\big(\hat f(M_T)\big)\\
&=\E_x\big(f(M_T)\big)-f(0)+\E_0\big(f(M_T)\big)\\
&=\E_x\big(\hat f(M_T)\big)+g(0)\\
&=\hat v(x),
\end{align*}
where in the last step assumption $g(0)=0$ is used. Theorem \ref{prop:h_maj} now yields that $\hat v\geq v$. 
On the other hand, using the strong Markov property, we obtain for $x\leq 0$ and $\epsilon>0$
\begin{align*}
v_\epsilon(x)&=\E_x^{S_\epsilon}\left(\sum_{n=1}^\infty e^{-r\tau_{n,\epsilon}}g(X_{\tau_{n,\epsilon},-})\right)\\
&=\E_x^{S_\epsilon}\left(e^{-r\tau_{1,\epsilon}}g(X_{\tau_{1,\epsilon},-})\right)+\E_x^{S_\epsilon}\left(\sum_{n=2}^\infty e^{-r\tau_{n,\epsilon}}g(X_{\tau_{n,\epsilon},-})\right)\\
&=\E_x(f(M_T);M_T\geq \epsilon)+\E_x^{S_\epsilon}\left(e^{-r\tau_{1,\epsilon}}v_\epsilon(0)\right)\\
&=\E_x(f(M_T);M_T\geq \epsilon)+v_\epsilon(0)\Pro_x\left({M_T\geq \epsilon}\right),
\end{align*}
where we used Lemma \ref{lem:M_tau} in the second-to-last step. For $x=0$, we obtain
\[v_\epsilon(0)=\E_0(f(M_T);M_T\geq \epsilon)+v_\epsilon(0)\Pro_0\left({M_T\geq \epsilon}\right),\]
i.e.
\begin{align}
v_\epsilon(0)&=\frac{\E_0(f(M_T);M_T\geq \epsilon)}{1-\Pro_0\left({M_T\geq \epsilon}\right)}\label{eq:v_eps0}\\
&=\frac{\E_0(f(M_T))-\E_0(f(M_T);M_T<\epsilon)}{\Pro_0\left({M_T< \epsilon}\right)}\nonumber\\
&=\frac{g(0)-\E_0(f(M_T);M_T<\epsilon)}{\Pro_0\left({M_T< \epsilon}\right)}\nonumber \\
&=-\frac{\E_0(f(M_T);M_T<\epsilon)}{\Pro_0\left({M_T< \epsilon}\right)}.\nonumber
\end{align}
By the continuity of $f$, it holds that 
\begin{align}\label{eq:v_eps_conv}
v_\epsilon(0)\rightarrow -f(0)\mbox{ as $\epsilon\rightarrow 0$}
\end{align}
Therefore, for $x\leq 0$ as $\epsilon\rightarrow0$
\begin{align*}
v(x)\geq v_\epsilon(x)&=\E_x(f(M_T);M_T\geq \epsilon)+v_\epsilon(0)\Pro_x\left({M_T\geq \epsilon}\right)\\
&\rightarrow\E_x(f(M_T);M_T>0)-f(0)\Pro_x\left({M_T>0}\right)\\
&=\E_x(\hat{f}(M_T);{M_T>0})\\
&=\E_x(\hat{f}(M_T);{M_T\geq0})\\
&=\hat{v}(x),
\end{align*}
where we used the fact that $\hat f(0)=0$. This shows that $\hat{v}(x)\leq v(x)$ for $x\leq 0$.
{
For $x>0$ and $\epsilon\in(0,x)$, it holds that $\tau_{1,\epsilon}=0$ $\Pro_x^{S_\epsilon}$-a.s.. Therefore, letting $\epsilon\rightarrow0$ and keeping \eqref{eq:v_eps_conv} in mind,
\begin{align*}
v(x)&\geq v_\epsilon(x)=\E_x^{S_\epsilon}\left(\sum_{n=1}^\infty e^{-r\tau_{n,\epsilon}}g(X_{\tau_{n,\epsilon},-})\right)\\
&=g(x)+v_\epsilon(0)\rightarrow g(x)-f(0)=\E_x(f(M_T))-f(0)=\hat{v}(x)
\end{align*}
so that $\hat v\leq v$ everywhere. Recalling that we have already proved $\hat v\geq v$, this shows $\hat v=v$ and the $\epsilon$-optimality. }
\end{proof}

For the previous considerations, it was essential that $f$ is non-decreasing and $g(0)=0$. The cases of a non-monotonous functions $f$ and $g(0)<0$ are treated in the following subsection. But first we consider a degenerated case, where the value function is infinite. 

\begin{thm}\label{prop:degenerated_infinity}
Assume that $g(0)=0$ and there exists a continuous function $f:(0,\infty) \rightarrow \R$ such that
\[g(x)=\E_x\big(f(M_T)\big)\mbox{ for all }x> 0\] 
and
\[\lim_{x\rightarrow0}f(x)=-\infty.\]
Then, for the impulse control strategies $S_\epsilon,\;\epsilon>0,$ given by
\[\tau_{n,\epsilon}=\inf\{t>\tau_{n-1,\epsilon}:X_t\geq \epsilon\},\;\tau_{0,\epsilon}=0,\]
it holds
\[v_\epsilon(0)=\E_0^{S_\epsilon}\left(\sum_{n=1}^\infty e^{-r\tau_{n,\epsilon}}g(X_{\tau_{n,\epsilon},-})\right)\rightarrow \infty\mbox{ as $\epsilon\rightarrow 0$}.\]
Consequently, the value function is infinite for $x\geq 0$. 
\end{thm}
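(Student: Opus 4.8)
The plan is to compute $v_\epsilon(0)$ explicitly using the same first-passage decomposition that appeared in the proof of Theorem \ref{thm:degenerated}, and then to show that the resulting expression blows up as $\epsilon\to0$ precisely because $f$ is unbounded below near the restarting point $0$. First I would note that the strong Markov property together with Lemma \ref{lem:M_tau}, applied exactly as in the derivation of \eqref{eq:v_eps0}, yields the renewal-type identity
\begin{align*}
v_\epsilon(0)=\frac{\E_0\big(f(M_T);\,M_T\geq \epsilon\big)}{1-\Pro_0\big(M_T\geq \epsilon\big)}=\frac{\E_0\big(f(M_T);\,M_T\geq \epsilon\big)}{\Pro_0\big(M_T<\epsilon\big)},
\end{align*}
valid for every $\epsilon>0$ under the same finiteness considerations. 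The point of this representation is that it isolates the contribution of the process never exceeding the small threshold $\epsilon$ (the denominator) from the reward collected each time the process is pushed up to level $\epsilon$ or beyond (the numerator).

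Next I would analyze the two factors as $\epsilon\to0$. Using $g(0)=\E_0(f(M_T))=0$, I would rewrite the numerator as $\E_0\big(f(M_T);M_T\geq\epsilon\big)=-\E_0\big(f(M_T);M_T<\epsilon\big)$, so that
\begin{align*}
v_\epsilon(0)=-\frac{\E_0\big(f(M_T);\,M_T<\epsilon\big)}{\Pro_0\big(M_T<\epsilon\big)}=-\E_0\big(f(M_T)\,\big|\,M_T<\epsilon\big).
\end{align*}
Thus $v_\epsilon(0)$ is exactly the negative of the conditional expectation of $f(M_T)$ given that the running maximum stays below $\epsilon$. Since $M_T\geq0$ always and $M_T<\epsilon$ forces $M_T\in[0,\epsilon)$, the conditioning concentrates the law of $M_T$ on the shrinking interval $[0,\epsilon)$ as $\epsilon\to0$. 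Because $\lim_{x\to0}f(x)=-\infty$, for any $K>0$ there is $\delta>0$ with $f(x)\leq -K$ for all $x\in[0,\delta)$; hence for $\epsilon\leq\delta$ one has $\E_0\big(f(M_T)\mid M_T<\epsilon\big)\leq -K$, giving $v_\epsilon(0)\geq K$. Letting $K\to\infty$ shows $v_\epsilon(0)\to\infty$.

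Finally I would transfer this to the value function. Since each $S_\epsilon$ is an admissible impulse control strategy (the thresholds $\tau_{n,\epsilon}$ increase to $\infty$ by right continuity of the paths, as in Theorem \ref{thm:degenerated}), we have $v(0)\geq v_\epsilon(0)$ for every $\epsilon$, whence $v(0)=\infty$. For $x>0$ the same strategies with $\epsilon\in(0,x)$ give $\tau_{1,\epsilon}=0$ $\Pro_x^{S_\epsilon}$-a.s., so $v_\epsilon(x)=g(x)+v_\epsilon(0)\to\infty$, and therefore $v(x)=\infty$ for all $x\geq0$. The main obstacle I anticipate is not the limiting argument itself but the measure-theoretic care needed to justify the conditional-expectation manipulation when $f$ is unbounded below: I must ensure $\E_0\big(f(M_T);M_T<\epsilon\big)$ is well defined and that the lower bound $f\leq -K$ near $0$ can be used without the positive part of $f$ interfering, which is handled by splitting $f=f^+-f^-$ and noting that on $\{M_T<\epsilon\}$ with $\epsilon\leq\delta$ the integrand is bounded above by $-K$, so the conditional expectation is genuinely $\leq -K$ regardless of integrability of $f^+$ over the full space.
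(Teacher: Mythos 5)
Your proposal is correct and follows essentially the same route as the paper: the same renewal identity $v_\epsilon(0)=-\E_0\bigl(f(M_T);M_T<\epsilon\bigr)/\Pro_0\bigl(M_T<\epsilon\bigr)$ obtained from the argument behind \eqref{eq:v_eps0} together with $g(0)=0$, followed by the same lower bound (your conditional-expectation estimate is exactly the paper's bound $v_\epsilon(0)\geq-\sup_{x\in(0,\epsilon)}f(x)\rightarrow\infty$). Your explicit transfer to $v(x)=\infty$ for all $x\geq 0$ and the remark on splitting $f=f^+-f^-$ merely spell out details the paper leaves implicit.
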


\begin{proof}
Arguments similar to those yielding \eqref{eq:v_eps0} in the previous proof can be used here to obtain
\begin{align*}
v_\epsilon(0)&=-\frac{\E_0(f(M_T);M_T<\epsilon)}{\Pro_0\left({M_T< \epsilon}\right)}\geq -\sup_{x\in(0,\epsilon)}f(x)\rightarrow\infty.
\end{align*}
\end{proof}

\subsection{Non-degenerated case}\label{subsec:non-deg}
In this subsection we study reward functions $g$ for which the representing function $f$ do not have to be monotonic. To be more precise, we assume the following:
\begin{assumption}\label{ass:f}
There exists a continuous function $f:[0,\infty)\cap E \rightarrow \R$ such that for all $x\geq 0$
\[g(x)=\E_x\big(f(M_T)\big)\]
with\ $g(0)\leq 0$. Moreover, there exists a global minimum point $\un{x}\in[0,\infty)\cap E$ of $f$ such that
\begin{itemize}
\item $f(\un x)<0$,
\item $f$ is strictly increasing on $[\un x,\infty)\cap E$,
\item $f(x)>0$ for $x$ sufficiently large.
\item if $g(0)=0$, then\ $\un x>0$.
\end{itemize}
\end{assumption}


\begin{figure}
\begin{center}
\begin{tikzpicture}
\begin{axis}[
]
\addplot[red,domain=0:2.5,samples=201]
{((x -1)^2-1)};
\addplot[black,domain=0:2.5,samples=201]
{(-.6)};
\addplot+[nodes near coords,only marks,
point meta=explicit symbolic]
table[meta=label] {
x y label
1.63 -0.6 $(x_c,-c)$
2 0 $(\ov x,0)$
1 -1 $(\un x,-c^*)$
0.37 -0.6 $(\un x_{c},-c)$
};
\end{axis}
\end{tikzpicture}
\caption{Plot of an Appell polynomial $f=Q_m$ for $m=2$ (red).}\label{fig:v2}
\end{center}
\end{figure}
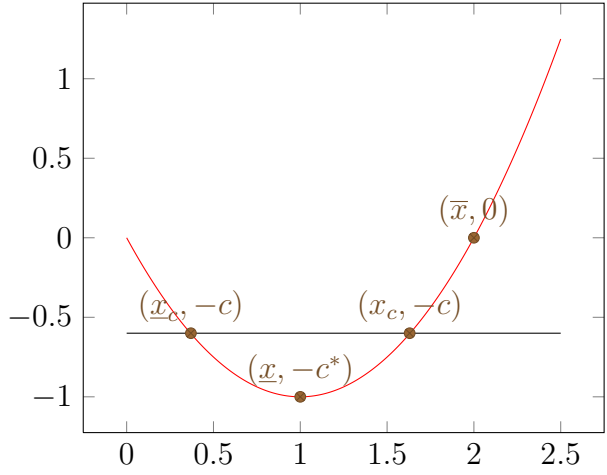

Let $c^*:=-f(\un x)$. By the intermediate value theorem and the monotonicity, for each $c\in[0,c^*]$ there is a unique $x_c\in[\un x,\ov x]$ such that $f(x_c)=-c$, where $\ov x$ denotes the unique root of $f$ in $[\un x,\infty)$, which exists by the intermediate value theorem. Now, consider the following candidates for the value function:
\[\hat{v}_c(x)=\E_x\left(\hat f_c(M_T);{M_T\geq x_c}\right),\]
where $\hat f_c(x)=f(x)+c$ and $c\in[0,c^*]$.

%

\begin{lemma}\label{lem:v_c}
Under Assumption \ref{ass:f} and the notation given above, for each\ $c\in[0,c^*]$ it holds that
\begin{enumerate}[(i)]
  \item $\hat{v}_c(x)=\E_x\big(\sup_{t\leq T}(\hat f_c(X_t)\mathds{1}_{\{X_t\geq x_c\}})\big)$ for all $x$,
  \item\label{item:exc} $\hat{v}_c$ is non-negative and $r$-excessive for the underlying uncontrolled process,
  \item\label{item:v_c3} $\hat{v}_c$ has the $r$-harmonicity property on $(-\infty,x_c)$, i.e. 
  \[\hat{v}_c(x)=\E_x\big(e^{-r\tau}\hat{v}_c(X_\tau)\big),\]
  where $\tau=\inf\{t\geq0:X_t\geq x_c\}$ and $x\leq x_c$,
  \item\label{item:Mv} $\hat{v}_c(x)=c+g(x)$ for $x\geq x_c$,
  \item there exists $\hat c\in(0,c^*]$ such that $\hat c=\hat{v}_{\hat c}(0)$.
\end{enumerate}
\end{lemma}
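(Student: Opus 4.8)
The plan is to establish parts (i)--(iv) by direct computation using Lemma~\ref{lem:M_tau} and Lemma~\ref{lem:sup_exc}, and then derive the existence statement (v) by a continuity/intermediate-value argument applied to the map $c\mapsto \hat v_c(0)-c$. For part (i), I would argue that since $\hat f_c=f+c$ is strictly increasing on $[\un x,\infty)$ with $\hat f_c(x_c)=0$, the truncated function $y\mapsto \hat f_c(y)\mathds{1}_{\{y\geq x_c\}}$ is non-negative and non-decreasing on all of $E$; hence for any path the running supremum $\sup_{t\leq T}\big(\hat f_c(X_t)\mathds{1}_{\{X_t\geq x_c\}}\big)$ is achieved at the running maximum $M_T$ and equals $\hat f_c(M_T)\mathds{1}_{\{M_T\geq x_c\}}$. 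Taking expectations gives (i). Part (ii) is then immediate from Lemma~\ref{lem:sup_exc}, since the representing function is non-negative and upper semicontinuous, so $\hat v_c$ is $r$-excessive; non-negativity is clear from the representation in (i).

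For (iii), I would apply Lemma~\ref{lem:M_tau} with the threshold $y=x_c$. Writing $\tau=\tau_{x_c}=\inf\{t\geq0:X_t\geq x_c\}$, the identity \eqref{eq:harm_threshold} (applied to the representing function $\hat f_c$ and its associated reward $\hat g_c(x):=\E_x(\hat f_c(M_T))=g(x)+c$) gives
\[
\E_x\big(\hat f_c(M_T);M_T\geq x_c\big)=\E_x\big(e^{-r\tau}\hat g_c(X_\tau)\big).
\]
For $x\leq x_c$, the left-hand side is exactly $\hat v_c(x)$, and since $X_\tau\geq x_c$ on $\{\tau<\infty\}$, the strong Markov property together with (iv) lets me rewrite $\hat g_c(X_\tau)=\hat v_c(X_\tau)$ on that event, yielding the $r$-harmonicity. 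Part (iv) follows from the same passage identity: for $x\geq x_c$ one has $\tau=0$, and the representation splits as
\[
\hat v_c(x)=\E_x\big(\hat f_c(M_T);M_T\geq x_c\big)=\E_x\big(\hat f_c(M_T)\big)=g(x)+c,
\]
since $x\geq x_c$ forces $M_T\geq x_c$ always, so the indicator is redundant.

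For the existence statement (v), I would study $\Phi(c):=\hat v_c(0)-c$ on $[0,c^*]$ and seek a zero in $(0,c^*]$. The strategy is to check the sign at the two endpoints and invoke continuity. At $c=c^*$ the threshold is $x_{c^*}=\un x$, and because $\un x>0$ when $g(0)=0$ (and more generally because $g(0)\leq 0$), one expects $\hat v_{c^*}(0)\geq c^*$, giving $\Phi(c^*)\geq 0$; the harmonicity property (iii) and the fact that $\hat v_c(0)$ is an expected discounted reward bounded by the excessive majorant should make this sign determination tractable. At $c=0$ the threshold is $x_0=\ov x$, where $\hat f_0=f$ vanishes, and one expects $\hat v_0(0)$ to be small relative to the trivial lower bound, so $\Phi(0)\leq 0$ or at least $\Phi(0)<\Phi(c^*)$ in a way that forces a crossing. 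The main obstacle I anticipate is verifying continuity and the correct endpoint signs of $c\mapsto\hat v_c(0)$: the threshold $x_c$ moves continuously with $c$ by the strict monotonicity of $f$ and the intermediate value theorem, but showing that $\hat v_c(0)=\E_0\big(\hat f_c(M_T);M_T\geq x_c\big)$ depends continuously on $c$ requires controlling the joint variation of the integrand $\hat f_c=f+c$ and the shrinking/growing event $\{M_T\geq x_c\}$ simultaneously, which is where a dominated-convergence argument (using the finiteness assumption \eqref{eq:finite} and continuity of $f$) will need to be carried out carefully. Once continuity and the sign conditions $\Phi(0)\leq 0\leq\Phi(c^*)$ are in hand, the intermediate value theorem delivers $\hat c\in(0,c^*]$ with $\hat c=\hat v_{\hat c}(0)$.
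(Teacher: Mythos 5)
Parts (i)--(iv) of your proposal are correct and essentially identical to the paper's (sketched) argument: (i) uses that $y\mapsto \hat f_c(y)\mathds{1}_{\{y\geq x_c\}}$ is non-negative, non-decreasing and continuous (since $\hat f_c(x_c)=0$), (ii) is then Lemma \ref{lem:sup_exc}, (iii) is Lemma \ref{lem:M_tau} applied to $\hat f_c$ and $\hat g_c=g+c$ with threshold $x_c$, and (iv) follows because $M_T\geq X_0=x\geq x_c$ makes the indicator redundant.

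Part (v), however, contains a genuine error: your endpoint signs are backwards. Writing $\Phi(c)=\hat v_c(0)-c$, at $c=0$ the threshold is $x_0=\ov{x}$ and $f\geq 0$ on $[\ov{x},\infty)$, so
\begin{equation*}
\Phi(0)=\E_0\left(f(M_T);M_T\geq \ov{x}\right)\geq 0,
\end{equation*}
not $\Phi(0)\leq 0$ as you claim. At $c=c^*$ the threshold is $x_{c^*}=\un{x}$ and $f+c^*\geq 0$ everywhere (because $-c^*=f(\un{x})$ is the global minimum value), so dropping the indicator gives an \emph{upper} bound:
\begin{equation*}
\hat v_{c^*}(0)=\E_0\left(f(M_T)+c^*;M_T\geq \un{x}\right)\leq \E_0\left(f(M_T)\right)+c^*=g(0)+c^*\leq c^*,
\end{equation*}
i.e.\ $\Phi(c^*)\leq g(0)\leq 0$, not $\Phi(c^*)\geq 0$. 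So the two facts your plan sets out to verify (via vague appeals to harmonicity and excessive majorants) are false in general, and the verification step would get stuck; the correct signs are established by exactly the two elementary estimates above, which is the paper's proof (with $z=-\Phi$). The intermediate value theorem of course works with either orientation, but a proof must establish the true one. Two further points: you never invoke the assumption ``$g(0)=0\Rightarrow \un{x}>0$'' from Assumption \ref{ass:f}; it is precisely what makes one of the two inequalities at $c=c^*$ strict (the first when $\un{x}>0$, the second when $\un{x}=0$), which is needed to place $\hat c$ in $(0,c^*]$ rather than merely in $[0,c^*]$. On the positive side, your observation that continuity of $c\mapsto \hat v_c(0)$ requires an argument (joint control of $f+c$ and the event $\{M_T\geq x_c\}$) is legitimate --- the paper applies the intermediate value theorem without comment --- but in your write-up this is deferred, not done.
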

\begin{proof}
$(i)$ follows from the monotonicity assumption on $f$, $(ii)$ is obtained from Lemma \ref{lem:sup_exc} are clear recalling the monotonicity assumptions on $f$ and the results of Subsection \ref{subsec:hunt}. $(iii)$ can easily be obtained from \eqref{eq:harm_threshold}. For $(iv)$ notice that for $x\geq x_c$
\begin{align*}
\hat{v}_c(x)&=\E_x(\hat{f}_c(M_T);M_T\geq x_c)=\E_x({f}(M_T);M_T\geq x_c)+c\Pro_x(M_T\geq x_c)\\
&=g(x)+c.
\end{align*}
For $(v)$ consider the function\ $z:c\mapsto c-\hat{v}_c(0)$. Since $f\geq 0$ on $[\ov x,\infty)$ it holds that
\[z(0)=-\hat v_0(0)=-\E_0\left(f(M_T);{M_T\geq \ov x}\right)\leq 0\]
and, on the other hand, since $f(x)+c^*\geq 0$ for all $x\geq0$
\begin{align*}
z(c^*)&=c^*-\E_0\left(f(M_T)+c^*;{M_T\geq \un x}\right)\geq c^*-\E_0\left(f(M_T)+c^*\right)\\
&=-\E_0(f(M_T))=-g(0)\geq 0,
\end{align*}
where, due to Assumption \ref{ass:f}, the first inequalityy is strict if $\underline{x}>0$ and the second if $\underline{x}=0$. 
Now, the intermediate value theorem yields the existence of $\hat c$. 
\end{proof}

\begin{remark}\label{rem:x_c}
Recall that
\[{\bf M}\hat{v}_c(x_c)=g(x_c)+\hat{v}_c(0)=\E_{x_c}(f(M_T))+\E_0[(f+c)(M_T);M_T\geq x_c]\]
and 
\[\hat{v}_c(x_c)=\E_{x_c}[(f+c)(M_T);M_T\geq x_c]=\E_{x_c}[(f+c)(M_T)]=g(x_c)+c.\]
Therefore, the value $\hat c$ in Lemma \ref{lem:v_c} $(v)$ can be characterized also via
\begin{align*}
{\bf M}\hat{v}_{\hat c}(x_{\hat c})=\hat{v}_{\hat c}(x_{\hat c}).
\end{align*}
\end{remark}

\begin{notation}\label{not:def_vhat}
Our candidate for the value function is now given by
\begin{align*}
\hat v:=\hat{v}_{\hat{c}},
\end{align*} 
where $\hat{c}$ is chosen to be the smallest one satisfying the condition 
given in Lemma \ref{lem:v_c} $(v)$, and we also write 
\[\hat{f}:=\hat{f}_{\hat{c}}=f+\hat{c}\mbox{ and }x^*=x_{\hat{c}}>0,\]
for short.
\end{notation}

We need one further assumption:
\begin{assumption}\label{eq:Mv_v}
For all $x\in(0,\un x)$
\begin{align}
\E_x(\hat f (M_T);M_T\leq x^*)\leq 0.\label{eq:Mv_v2}
\end{align}
\end{assumption}

\begin{remark}\label{rem:Mv_v}
Notice that for all\ $x\leq x^*$ we have
\begin{align*}
\E_x(\hat f (M_T);M_T\leq x^*)&=\E_x(\hat f (M_T))-\E_x(\hat f (M_T);M_T\geq x^*)\\
&=\E_x(f(M_T))+\hat c-\E_x(\hat f (M_T);M_T\geq x^*)\\
&=g(x)+\hat c-\E_x(\hat f (M_T);M_T\geq x^*),
\end{align*}
and, in particular, for $x=0$ using Lemma \ref{lem:v_c} $(iii)$
\begin{align}\label{eq:rem:Mv_v}
\E_0(\hat f (M_T);M_T\leq x^*)=g(0)+\hat c-\E_0(\hat f (M_T);M_T\geq x^*)=g(0)\leq 0.
\end{align}
Furthermore, \eqref{eq:Mv_v2} also trivially holds true for all $x\geq \un x$ due to the assumption $\hat{f}=f+\hat{c}<0$ on $[\un{x},x^*]$.
\end{remark}


\begin{thm}\label{thm:nondegenerated}
Let Assumptions \ref{ass:f} and \ref{eq:Mv_v} hold true.
Define a sequence $S^*=(\tau_n^*,\gamma_n^*)$ by $\tau_0^*:=0,$ 
\[\tau_n^*:=\inf\{t\geq \tau_{n-1}^*:X_t\geq x^*\},\;n=1,2,\dots\]
and $\gamma_n^*=0$. Then the sequence $S^*$ is an optimal impulse control sequence for the problem \eqref{eq:problem} and the value function is given by
\[v(x)=\hat v(x)=\E_x(\hat f(M_T);\ M_T\geq x^*)\]
with $\hat{c}$ as in Notation \ref{not:def_vhat}.
\end{thm}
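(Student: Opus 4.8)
The plan is to prove the two inequalities $v\le\hat v$ and $\hat v\le v$ separately, feeding the candidate $\hat v=\hat v_{\hat c}$ into the verification Theorem \ref{prop:h_maj} in both directions. For the upper bound I would invoke part (i): nonnegativity and $r$-excessivity of $\hat v$ are already Lemma \ref{lem:v_c}(ii), so the only point to verify is ${\bf M}\hat v(x)\le\hat v(x)$ for all $x$. This is trivial for $x<0$, where ${\bf M}\hat v(x)=-\infty$. For $x\ge0$, using $\hat v(0)=\hat c$ (Lemma \ref{lem:v_c}(v) and Notation \ref{not:def_vhat}), $g(x)=\E_x(f(M_T))$ and $\hat f=f+\hat c$, I would rewrite
\[
{\bf M}\hat v(x)-\hat v(x)=g(x)+\hat c-\E_x(\hat f(M_T);M_T\ge x^*)=\E_x\big(\hat f(M_T);M_T<x^*\big),
\]
so that the claim reduces to $\E_x(\hat f(M_T);M_T<x^*)\le0$. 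Since $\hat f(x^*)=0$, the events $\{M_T<x^*\}$ and $\{M_T\le x^*\}$ carry the same expectation, and the required sign is exactly Assumption \ref{eq:Mv_v} on $(0,\un x)$, equals $g(0)\le0$ at $x=0$ by \eqref{eq:rem:Mv_v}, and is $\le0$ for $x\ge\un x$ because $\hat f<0$ on $[\un x,x^*)$ (Remark \ref{rem:Mv_v}); for $x\ge x^*$ the event is empty and Lemma \ref{lem:v_c}(iv) even gives equality. Part (i) then yields $v\le\hat v$ and, as noted there, \eqref{eq:finite}.

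For the lower bound I would run $S^*$ through the argument behind Theorem \ref{prop:h_maj}(ii). Admissibility follows because $x^*>0$ and the process restarts at $0$: right-continuity of the paths makes each increment $\tau_n^*-\tau_{n-1}^*$ strictly positive, and these increments are i.i.d.\ by the strong Markov property, so $\tau_n^*\nearrow\infty$. The gain inequality holds with equality, since at $\tau_n^*$ one has $X_{\tau_n^*,-}\ge x^*$ and hence $\hat v(X_{\tau_n^*,-})=\hat c+g(X_{\tau_n^*,-})=\hat v(0)+g(X_{\tau_n^*,-})$ by Lemma \ref{lem:v_c}(iv). It remains to turn each optional-sampling step in the proof of part (i) into an equality: between $\tau_{n-1}^*$ and $\tau_n^*$ the process runs uncontrolled from the restart point $0<x^*$, so the strong Markov property together with the $r$-harmonicity of $\hat v$ on $(-\infty,x^*)$ from Lemma \ref{lem:v_c}(iii) gives $\E_x^{S^*}\big(e^{-r\tau_n^*}\hat v(X_{\tau_n^*,-})\mid\mathcal{F}_{\tau_{n-1}^*}\big)=e^{-r\tau_{n-1}^*}\hat v(0)$ for $n\ge2$, and the same identity with $\hat v(x)$ replacing $\hat v(0)$ for $n=1$, where $X_{\tau_0^*}=x$ and the telescoping naturally starts. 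Equality in \eqref{eq:harm_appl} together with the vanishing of the integrand ${\bf M}\hat v(X_{\tau_n^*,-})-\hat v(X_{\tau_n^*,-})=0$ (again Lemma \ref{lem:v_c}(iv)) then collapses the value of $S^*$ to $\hat v(x)$, giving $\hat v\le v$.

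The step I expect to be the main obstacle is not any single identity but the transversality bookkeeping needed to pass from the telescoped partial sums to the infinite series, i.e.\ showing that the boundary term $\E_x^{S^*}\big(e^{-r\tau_N^*}\hat v(X_{\tau_N^*,-})\big)$ tends to $0$ as $N\to\infty$. Here the harmonicity just established is exactly what saves the day: this term equals $\hat c\,\E_x^{S^*}(e^{-r\tau_{N-1}^*})$, which vanishes by dominated convergence since $\tau_{N-1}^*\nearrow\infty$ and $e^{-r\tau_{N-1}^*}\le1$. Combining $v\le\hat v$ and $\hat v\le v$ then gives $v(x)=\hat v(x)=\E_x(\hat f(M_T);M_T\ge x^*)$ and the optimality of $S^*$, in the same spirit as the limiting computation in the proof of Theorem \ref{thm:degenerated}.
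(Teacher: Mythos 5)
Your proof is correct and follows essentially the same route as the paper's: Theorem \ref{prop:h_maj}(i) combined with Assumption \ref{eq:Mv_v} and Remark \ref{rem:Mv_v} for $v\le\hat v$, and Theorem \ref{prop:h_maj}(ii) via Lemma \ref{lem:v_c}(iii) and (iv) for $\hat v\le v$. The extra bookkeeping you supply (admissibility of $S^*$, treating the $n=1$ step with $\hat v(x)$ in place of $\hat v(0)$, and the vanishing boundary term $\hat c\,\E_x^{S^*}(e^{-r\tau_{N-1}^*})\to 0$) consists of details the paper delegates to the verification theorem or declares obvious, not a different method.
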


\begin{proof}
As \eqref{eq:Mv_v2} clearly holds true also for all $x\geq \un x$, see Remark \ref{rem:Mv_v}, we have
$\mbox{for all }x>0$
\begin{align*}
\hat{v}(x)-{\bf M}\hat{v}(x)&=\hat{v}(x)-(g(x)+\hat{v}(0))\\
&=\E_x\left(\hat f(M_T);{M_T\geq x^*}\right)-\E_x\left(f(M_T)\right)-\hat{c}\\
&=\E_x\left(\hat f(M_T);{M_T\geq x^*}\right)-\E_x\left(\hat f(M_T)\right)\\
&=-\E_x\left(\hat f(M_T);{M_T\leq x^*}\right)\\
&\geq 0
\end{align*}
with equality for $x\geq x^*$, which is also known from Lemma \ref{lem:v_c} (iv). For $x\leq 0$, the inequality holds as ${\bf M}\hat{v}(x)=-\infty$ for $x<0$. Keeping Lemma \ref{lem:v_c}\eqref{item:exc} in mind, the assumptions of Theorem \ref{prop:h_maj} (i) are fulfilled, which yields that $\hat v\leq v$.

For the reverse inequality, we check that the assumption of Theorem \ref{prop:h_maj}\eqref{item:h_maj2} are fulfilled. First note that the sequence $S^*$ is obviously admissible. By the strong Markov property and the definition of $S^*$, we have to prove that
\begin{align}
&\hat{v}(0)=\E_0(e^{-r\tau_1^*}\hat v(X_{\tau_1^*}))\label{eq:cond1}\\
&\hat{v}(0)+g(y)=\hat{v}(y)\mbox{ for all }y\geq x^*.\label{eq:cond2}
\end{align}
Indeed, \eqref{eq:cond1} and \eqref{eq:cond2} hold by Lemma \ref{lem:v_c} \eqref{item:v_c3} and \eqref{item:Mv}, respectively.  
\end{proof}

\subsection{On Assumption \ref{eq:Mv_v}}\label{subsec:cond_6}
To apply Theorem \ref{thm:nondegenerated} one has to make sure that Assumption \ref{eq:Mv_v} holds true. In this subsection, we discuss conditions under which this is the case. The first such condition is trivial, but nonetheless useful in many situations where $g(0)<0$. 
\begin{proposition}\label{prop:un_x_0}
If $\hat f \leq 0$ on $(0,\un x)$ and Assumption \ref{ass:f} is valid, then also Assumption \ref{eq:Mv_v} holds true. In particular, this is the case if $\un x=0$, i.e. $f$ is strictly increasing and $g(0)<0$. 
\end{proposition}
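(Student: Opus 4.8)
The plan is to reduce Assumption \ref{eq:Mv_v} to the single pointwise inequality $\hat f\le 0$ on the whole interval $(0,x^*]$; once this is established, the integral inequality \eqref{eq:Mv_v2} follows at once, because on the event $\{M_T\le x^*\}$ the running maximum $M_T$ takes values only in $(0,x^*]$, so the integrand is nonpositive. Recall from Notation \ref{not:def_vhat} that $x^*=x_{\hat c}$ satisfies $f(x^*)=-\hat c$, hence $\hat f(x^*)=f(x^*)+\hat c=0$, and that $\hat f=f+\hat c$ with $\hat c\in(0,c^*]$; recall also that $x^*\in[\un x,\ov x]$, so in particular $x^*\ge\un x$.

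The first step is to establish $\hat f\le 0$ on $[\un x,x^*]$. By Assumption \ref{ass:f} the function $f$ is strictly increasing on $[\un x,\infty)\cap E$, and since $x^*\ge\un x$, for every $y\in[\un x,x^*]$ we have $f(y)\le f(x^*)=-\hat c$, whence $\hat f(y)=f(y)+\hat c\le 0$. Combining this with the hypothesis $\hat f\le 0$ on $(0,\un x)$ yields $\hat f\le 0$ on all of $(0,x^*]$. This gluing step is the only place where care is needed: one must check that the monotonicity-based bound on $[\un x,x^*]$ and the assumed bound on $(0,\un x)$ together cover the entire range $(0,x^*]$ into which $M_T$ can fall on the relevant event.

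Next I would fix $x\in(0,\un x)$ and exploit the trivial bound $M_T=\sup_{0\le t\le T}X_t\ge X_0=x>0$ under $\Pro_x$. On the event $\{M_T\le x^*\}$ the value $M_T$ therefore lies in $[x,x^*]\subseteq(0,x^*]$, where the previous step gives $\hat f\le 0$. Consequently $\hat f(M_T)\,\mathds{1}_{\{M_T\le x^*\}}\le 0$ holds $\Pro_x$-almost surely, and taking expectations yields $\E_x(\hat f(M_T);M_T\le x^*)\le 0$, which is exactly \eqref{eq:Mv_v2}. This verifies Assumption \ref{eq:Mv_v}.

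Finally, for the ``in particular'' assertion, the case $\un x=0$ makes the interval $(0,\un x)$ empty, so the hypothesis $\hat f\le 0$ on $(0,\un x)$ is vacuously satisfied and the conclusion applies directly. The equivalence stated in the proposition is consistent with Assumption \ref{ass:f}: $\un x=0$ forces $f$ to be strictly increasing on $[\un x,\infty)\cap E=[0,\infty)\cap E$, and by the last bullet of Assumption \ref{ass:f} (if $g(0)=0$ then $\un x>0$) together with $g(0)\le 0$ it forces $g(0)<0$. I do not expect any genuine obstacle in this argument, as it is elementary once the pointwise sign of $\hat f$ on $(0,x^*]$ is pinned down.
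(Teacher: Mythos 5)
Your proof is correct and takes essentially the same route as the paper's: the paper likewise deduces from the hypothesis and Assumption \ref{ass:f} that $\hat f\le 0$ on the whole interval up to $x^*$ (note the paper's word ``nonnegative'' there is a typo for ``nonpositive''), and then concludes \eqref{eq:Mv_v2} since the integrand is pointwise nonpositive on the event $\{M_T\le x^*\}$. Your write-up simply makes explicit the details the paper leaves implicit, namely the monotonicity argument giving $\hat f\le 0$ on $[\un x,x^*]$ via $\hat f(x^*)=0$, and the observation that $M_T\ge x>0$ under $\Pro_x$.
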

\begin{proof}
When $\hat f \leq 0$ on $(0,\un x)$, it is also nonnegative on $[0,x^*]$ by Assumption \ref{ass:f}. Therefore,
\begin{align*}
\E_x(\hat f (M_T);M_T\leq x^*)\leq 0\mbox{ for all }x\in(0,\un x).
\end{align*}
\end{proof}

Unfortunately, in some situations of interest, the function\ $\hat{f}=f+\hat c$ is \emph{not} nonnegative on $[0,x^*]$. In particular, this is the typical situation in the case $g(0)=0$. (Note that for $g(0)=0$, Assumption \ref{eq:Mv_v} holds with equality for $x=0$, see \eqref{eq:rem:Mv_v}, which makes it impossible that $\hat f\leq 0$ in all nontrivial cases.) In the following, we find sufficient conditions to guarantee the validity of Assumption \ref{eq:Mv_v} also in these cases when the underlying process is a L\'evy process or a diffusion.

\begin{proposition}\label{prop:density_decreasing}
Let\ $X$ be a L\'evy process and Assumption \ref{ass:f} be valid. 
It is, furthermore, assumed that $\hat{f}$ is decreasing on $[0,\un x]$ and that the distribution of $M_T$ under $\Pro_0$ is given by a possible atom at 0 and a Lebesgue density $b$ on $(0,\infty)$, 
which is \emph{non-increasing}.
Then Assumption \ref{eq:Mv_v} holds true.
\end{proposition}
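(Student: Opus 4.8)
The plan is to exploit the spatial homogeneity of the L\'evy process together with the monotone-density hypothesis on $M_T$. Since $X$ under $\Pro_x$ equals $x$ plus a copy of $X$ under $\Pro_0$, the running maximum satisfies $M_T=x+M_T^{(0)}$ in law, where $M_T^{(0)}$ has distribution $p_0\delta_0+b(y)\,dy$ under $\Pro_0$, with $p_0=\Pro_0(M_T=0)$. Hence the quantity in \eqref{eq:Mv_v2} equals
\[\Psi(x):=\E_x(\hat f(M_T);M_T\le x^*)=p_0\,\hat f(x)+\int_x^{x^*}\hat f(z)\,b(z-x)\,dz,\]
and by \eqref{eq:rem:Mv_v} we already know $\Psi(0)=g(0)\le 0$; the goal is to show $\Psi(x)\le 0$ for $x\in(0,\un x)$. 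First I would dispose of the case $\hat f(0)\le 0$, in which Proposition \ref{prop:un_x_0} already applies, so I may assume $\hat f(0)>0$. Since $\hat f$ is decreasing on $[0,\un x]$ with $\hat f(\un x)<0$ and, by Assumption \ref{ass:f} and Notation \ref{not:def_vhat}, $\hat f\le 0$ on $[\un x,x^*]$ with $\hat f(x^*)=0$, there is a unique $a\in(0,\un x)$ with $\hat f>0$ on $[0,a)$ and $\hat f\le 0$ on $[a,x^*]$.

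For $x\in[a,\un x)$ the claim is immediate: on $[x,x^*]\subseteq[a,x^*]$ one has $\hat f\le 0$, and since $p_0\ge 0$ and $b\ge 0$, every term in $\Psi(x)$ is nonpositive. The substance of the proof is the remaining range $x\in(0,a)$, where $\hat f(x)>0$ and the positive contributions near $x$ must be dominated by the negative contributions on $[a,x^*]$.

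For this range the key device is the classical representation of a non-increasing density as a mixture of uniform densities, $b(y)=\int_{(0,\infty)}\tfrac1s\mathbf 1_{(0,s)}(y)\,\rho(ds)$ for a suitable measure $\rho$. Setting $H(x,s):=\tfrac1s\int_x^{\min(x+s,\,x^*)}\hat f(z)\,dz$, Fubini gives $\Psi(x)=p_0\hat f(x)+\int_{(0,\infty)}H(x,s)\,\rho(ds)$. For the atom term one has $p_0\hat f(x)\le p_0\hat f(0)$ because $\hat f$ is decreasing. For the integral I would establish the pointwise comparison $H(x,s)\le H(0,s)$ for every $s>0$: translating both sides into averages of $\hat f$ over windows, this reduces to elementary comparisons between the window $[x,x+s]$ and $[0,s]$ (each truncated at $x^*$), which close by splitting on whether the windows reach $x^*$ and using only that $\hat f\ge 0$ and decreasing on $[0,x]\subseteq[0,a]$ while $\hat f\le 0$ on $[a,x^*]$. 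Combining the atom bound with the integral of the pointwise bound against $\rho$ then yields $\Psi(x)\le\Psi(0)=g(0)\le 0$, which is exactly \eqref{eq:Mv_v2}.

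The main obstacle is precisely the window comparison $H(x,s)\le H(0,s)$ in the regime where the right-hand window $[x,x+s]$ straddles $a$, so that the shifted average of $\hat f$ picks up negative mass that $[0,s]$ does not. A naive termwise comparison of $\Psi(x)$ and $\Psi(0)$ fails there, since replacing $b(z-x)$ by $b(z)$ is favorable where $\hat f\ge 0$ but unfavorable where $\hat f\le 0$; it is exactly the uniform-mixture structure forced by the monotonicity of $b$ that repackages these competing effects into window-average inequalities that can be verified, and the restriction to $x<a$ (where $\hat f\ge 0$ on $[0,x]$) is what makes every case close.
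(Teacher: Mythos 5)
Your proposal is correct, but it follows a genuinely different route from the paper's proof. Both arguments use spatial homogeneity to write $z(x):=\E_x(\hat f(M_T);M_T\leq x^*)=p_0h(x)+\int_0^\infty h(x+y)b(y)\,dy$ with $h=\hat f\mathds{1}_{\{\cdot\leq x^*\}}$, handle $x\in[a,\un x)$ trivially, and reduce everything to showing $z(x)\leq z(0)=g(0)\leq 0$ via \eqref{eq:rem:Mv_v}; the difference lies in how the monotonicity of $b$ enters. The paper uses it in a single pointwise move: it splits the integral at the first zero $\un x_{\hat c}$ (your $a$) of $\hat f$, bounds the atom term by $h(0)\geq h(x)$, on the positive part drops the mass over $[\un x_{\hat c}-x,\un x_{\hat c}]$ and shifts using that $h$ is nonnegative and decreasing there, and on the negative part replaces $b(y)$ by the larger $b(y-x)$, which is favorable precisely because $h\leq 0$ there; a change of variables then reassembles $z(x)$. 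You instead invoke the Khinchine--Williamson representation of a non-increasing density as a mixture of uniform densities and reduce the claim to the deterministic window comparison $H(x,s)\leq H(0,s)$, checked by cases. Your route isolates a clean deterministic lemma and makes transparent why monotonicity of $b$ is the natural hypothesis (uniform windows suffice); the paper's route buys brevity -- three termwise inequalities, no representation theorem, no Fubini, no case analysis. Two remarks on your sketch: the window comparison does close in every case (in the straddling regime, $x+s>a$ forces $a-x<s$, resp.\ $a-s<x$, so after one shift comparison the positive integral of $\hat f$ over $[0,s]$, resp.\ $[0,x]$, dominates $\int_x^a\hat f$, resp.\ $\int_s^a\hat f$, while the overshoot beyond $a$ contributes nonpositively), but note that this shift comparison needs $\hat f$ decreasing on all of $[0,a]$, not merely on $[0,x]$ as you state; this is harmless, since the hypothesis of the proposition gives monotonicity on $[0,\un x]\supseteq[0,a]$.
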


\begin{proof}
Let $x\in (0,\un x_{\hat c})$, where $\un x_{\hat c}$ denotes the smallest root of $\hat f$ in $(0,\un x)$ and write
\begin{align*}
z(x)&:=\E_x(\hat f(M_T);\ M_T\leq x^*).
\end{align*}
We have to prove that $z(x)\leq 0$. From \eqref{eq:rem:Mv_v}, we already know that $z(0)\leq 0$. Writing $h:=\hat f1_{\{x\leq x^*\}}$ and keeping in mind that $h$ is nonnegative and decreasing on $(0,\un x_{\hat c})$ and nonpositive on $(\un x_{\hat c},\infty)$, it holds that
\begin{align*}
0&\geq z(0)=\E_0 h(M_T)=h(0)\Pro_0(M_T=0)+\int_0^\infty h(y)b(y)dy\\
&=h(0)\Pro_0(M_T=0)+\int_0^{\un x_{\hat c}} h(y)b(y)dy+\int_{\un x_{\hat c}}^\infty h(y)b(y)dy\\
&\geq h(x)\Pro_0(M_T=0)+\int_0^{\un x_{\hat c}-x} h(y)b(y)dy+\int_{\un x_{\hat c}}^\infty h(y)b(y-x)dy\\
&\geq  h(x)\Pro_0(M_T=0)+\int_0^{\un x_{\hat c}-x} h(x+y)b(y)dy+\int_{\un x_{\hat c}-x}^\infty h(x+z)b(z)dz\\
&= h(x)\Pro_0(M_T=0)+\int_0^{\infty} h(x+y)b(y)dy=\E_0 \big(h(x+M_T)\big)=z(x).
\end{align*}
\end{proof}

\begin{remark}
The assumption that $M_T$ has a Lebesgue density $b$ which is {non-increasing} with a possible atom at 0 holds true, e.g., for all spectrally negative L\'evy processes where $M_T$ is exponentially distributed. It furthermore holds for processes with mixed exponential upward jumps, see \cite{Mordecki02}.
\end{remark}

Now, we state a similar result, that typically holds true for Markov processes with no positive jumps.

\begin{proposition}\label{prop:suff_cond_diff}
~\\[-.6cm]
\begin{enumerate}[(i)]
\item Assume that 
\begin{equation}\label{eq:decomp_M_T}
\Pro_x(M_T\in dy)=\psi_r(x)\sigma(dy),\;y\geq x,\end{equation}
where\ $\psi_r$ denotes an increasing function and $\sigma$ is a measure that does not depend on $x$. Under Assumption \ref{ass:f} 
it is, furthermore, assumed that $\hat{f}$ changes sign only once in $(0,\un x)$ (from + to -). 
Then Assumption \ref{eq:Mv_v} holds true.
\item A decomposition of the type \eqref{eq:decomp_M_T} holds true for all regular diffusions and all spectrally negative L\'evy processes.
\end{enumerate}
\end{proposition}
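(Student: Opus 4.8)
The plan is to establish (ii) first, since it supplies the product structure that drives (i), and then to exploit that structure together with the single--sign--change hypothesis. For (ii), I would start from the fluctuation identity $\Pro_x(M_T\ge y)=\E_x(e^{-r\tau_y})$, which follows from Lemma~\ref{lem:M_tau} applied to $f\equiv 1$ (or directly from the memorylessness of $T$). For a regular diffusion the classical representation $\E_x(e^{-r\tau_y})=\psi_r(x)/\psi_r(y)$ holds for $x\le y$, with $\psi_r$ the increasing solution of $\mathcal{G}\psi_r=r\psi_r$, where $\mathcal{G}$ is the infinitesimal generator; for a spectrally negative L\'evy process spatial homogeneity gives $\E_x(e^{-r\tau_y})=e^{-\Phi(r)(y-x)}$, so that $\psi_r(x)=e^{\Phi(r)x}$. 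In either case $\psi_r$ is increasing and positive, and setting $\sigma(dy):=-\,d\!\left(1/\psi_r(y)\right)$ — a positive measure independent of $x$, with no atom at the starting point by regularity/continuity of $\psi_r$ — differentiation of the tail $\psi_r(x)/\psi_r(y)$ yields \eqref{eq:decomp_M_T}.

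For (i), I would insert \eqref{eq:decomp_M_T} into $z(x):=\E_x(\hat f(M_T);M_T\le x^*)$ and factor out the $x$--dependence,
\[
z(x)=\psi_r(x)\int_{[x,x^*]}\hat f\,d\sigma .
\]
Since $\Pro_x(M_T\ge x)=1=\psi_r(x)\,\sigma([x,\infty))$ forces $\psi_r(x)>0$, it suffices to prove $G(x):=\int_{[x,x^*]}\hat f\,d\sigma\le 0$ for $x\in(0,\un x)$. The next step is to pin down the sign of $\hat f$ on $(0,x^*)$: by the hypothesis that $\hat f$ changes sign only once in $(0,\un x)$ (from $+$ to $-$) and by Assumption~\ref{ass:f} (which gives $f\le f(x^*)=-\hat c$, hence $\hat f\le 0$, on $[\un x,x^*]$), one obtains $\hat f\ge 0$ on $[0,\un x_{\hat c})$ and $\hat f\le 0$ on $[\un x_{\hat c},x^*]$, where $\un x_{\hat c}\in(0,\un x)$ denotes the sign--change point.

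I would then split into two regimes. For $x\ge \un x_{\hat c}$ the integrand is nonpositive on the whole window $[x,x^*]$, so $G(x)\le 0$ immediately. For $x<\un x_{\hat c}$ I would bring in the boundary value: \eqref{eq:rem:Mv_v} gives $z(0)=g(0)\le 0$, hence $G(0)\le 0$, and then
\[
G(x)=G(0)-\int_{[0,x)}\hat f\,d\sigma\le G(0)\le 0,
\]
because $\hat f\ge 0$ on $[0,x)\subseteq[0,\un x_{\hat c})$ (continuity of $\hat f$ forces $\hat f(0)\ge 0$, so the removed piece is nonnegative even in the presence of a possible atom of $\sigma$). Combining the two regimes gives $G\le 0$ on $(0,\un x)$, whence $z\le 0$, which is precisely \eqref{eq:Mv_v2}.

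The main obstacle is the regime $x<\un x_{\hat c}$: there the window $[x,x^*]$ still contains the positive hump of $\hat f$, so no purely local sign argument can succeed, and one must feed in the \emph{global} constraint $G(0)\le 0$ (equivalently $g(0)\le 0$). It is exactly the single sign change that guarantees the mass discarded on $[0,x)$ is nonnegative, which mirrors the role played by monotonicity of the density $b$ in Proposition~\ref{prop:density_decreasing}; the difference is that here the window shrinks from the left while the prefactor $\psi_r(x)$ simply factors out, rather than the argument being shifted. The remaining technical points — positivity of $\psi_r$ (automatic from $\Pro_x(M_T\ge x)=1$) and the absence of an atom of $M_T$ at the starting point for the two classes in (ii) — are routine.
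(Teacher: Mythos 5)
Your proof is correct and follows essentially the same route as the paper's: for (i) you factor out $\psi_r(x)$, split at the sign-change point $\un x_{\hat c}$, and use that the window integral can only decrease as mass of nonnegative $\hat f$ is removed from the left, anchored by $z(0)=g(0)\leq 0$ from \eqref{eq:rem:Mv_v} (the paper phrases this as $w(x)=\int_x^{x^*}\hat f\,d\sigma$ being decreasing on $[0,\un x_{\hat c}]$ with $w(0)\leq 0$); for (ii) your derivation via $\Pro_x(M_T\geq y)=\E_x(e^{-r\tau_y})=\psi_r(x)/\psi_r(y)$, respectively $e^{-\theta(y-x)}$, is the same decomposition the paper quotes from the literature. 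Your added remarks on positivity of $\psi_r$ and the absence of an atom at the starting point are fine but not needed beyond what the paper implicitly assumes.
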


\begin{proof}
To prove $(i)$, let $x\in (0,\un x_{\hat c})$, where $\un x_{\hat c}$ denotes the smallest root of $\hat f$ in $(0,\un x)$. By assumption $\hat{f}(y)\geq 0$ on $(0,\un x_{\hat c})$ and $\hat{f}(y)\leq 0$ on $[\un x_{\hat c},x^*]$ and write
\begin{align*}
z(x)&:=\E_x(\hat f(M_T);\ M_T\leq x^*)
\end{align*}
and we obtain that 
\[z(x)=\psi_r(x)\int_x^{ x^*}\hat{f}(y)\sigma(dy).\]
Obviously, $z(x)\leq 0$ on $[\un x_{\hat c},x^*]$ and, by \eqref{eq:rem:Mv_v}, $z(0)\leq0$. Therefore, we shall prove that
\[w(x):=\int_x^{ x^*}\hat{f}(y)\sigma(dy)\]
is non-positive on $[0,\un x_{\hat{c}}]$, but since $\hat{f}$ is nonnegative on this interval, $w$ is decreasing there. By Remark \ref{rem:Mv_v} we know that $w(0)\leq0$. This proves the assertion. 

 For (ii) recall that the distribution of $M_T$ for a regular diffusion processes is given by (see \cite{BS}, p. 26):
\[\Pro_x(M_T\in dy)=\psi_r(x)\sigma(dy),\;y\geq x,\]
where\ $\psi_r$ denotes the increasing fundamental solution for the generalized differential equation associated with $X$ and the measure $\sigma$ does not depend on $x$ and is given by
\[\sigma([y,\infty))=\frac{1}{r\psi_r(y)}.\] 
For spectrally negative L\'evy processes, it is well-known that $M_T$ is Exp($\theta$)-distributed, where $\theta$ denotes the unique root of the equation $\Psi(\lambda)=r$, where $\Psi$ denotes the L\'evy exponent of $X$, see \cite{kyp}, p. 213. Therefore,
\[\Pro_x(M_T\geq y)=\Pro_0(M_T\geq y-x)=e^{-\theta(y-x)}=e^{\theta x}e^{-\theta y},\]
which yields the desired decomposition.
\end{proof}

\begin{remark}
Indeed, the assumption \eqref{eq:decomp_M_T} holds for wide classes of Hunt processes with no positive jumps under minimal assumptions, see, e.g., the discussion in \cite{Cisse2012}, Proposition 4.1. 
\end{remark}

For underlying diffusion processes, the advantage in the treatment of impulse control problems is that no overshoot occurs. This allows for using techniques which are very different in nature to the approach we use here. To see that the results obtained using special diffusion techniques are essentially covered by our results, we use the following characterization of our Assumption \ref{eq:Mv_v}. Equation \eqref{eq:5diff} below, which is a consequence of the quasi-variational inequalities, plays an essential role in \cite{Alvarez04}. 

\begin{proposition}
Let\ $X$ be a regular diffusion process and let Assumption \ref{ass:f} hold true. Then Assumption \ref{eq:Mv_v} holds true if and only if
\begin{align}\label{eq:5diff}
\psi_r(x)\frac{g(x^*)-g(0)}{\psi_r(x^*)-\psi_r(0)}\geq \frac{\psi_r(0)g(x^*)-\psi_r(x^*)g(0)}{\psi_r(x^*)-\psi_r(0)}+g(x)\mbox{ for all }x\in[0,x^*],
\end{align}
where\ $\psi_r$ denotes the increasing fundamental solution for the generalized differential equation associated with $X$.
\end{proposition}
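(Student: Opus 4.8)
The plan is to specialise the analysis behind Proposition~\ref{prop:suff_cond_diff} to the diffusion setting, where the law of $M_T$ factorises as $\Pro_x(M_T\in dy)=\psi_r(x)\sigma(dy)$ for $y\ge x$ and $\E_x(e^{-r\tau_y})=\psi_r(x)/\psi_r(y)$ for $x\le y$, and to turn Assumption~\ref{eq:Mv_v} into an explicit two-point inequality for $g$ read against the scale function $\psi_r$. First I would narrow the range of $x$ that has to be tested. Writing $z(x):=\E_x(\hat f(M_T);M_T\le x^*)$, Remark~\ref{rem:Mv_v} already gives $z(0)=g(0)\le 0$ and $z(x)\le 0$ for $x\in[\un x,x^*]$ (there $\hat f\le 0$), so \eqref{eq:Mv_v2} is equivalent to requiring $z(x)\le 0$ for every $x\in[0,x^*]$. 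Inserting the factorisation gives $z(x)=\psi_r(x)\int_x^{x^*}\hat f(y)\,\sigma(dy)$, and since $\psi_r(x)>0$ this is equivalent to $w(x):=\int_x^{x^*}\hat f(y)\,\sigma(dy)\le 0$ on $[0,x^*]$, exactly the function handled in the proof of Proposition~\ref{prop:suff_cond_diff}(i).

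Next I would make $w$ fully explicit. From $g(x)=\E_x(f(M_T))=\psi_r(x)\int_x^\infty f\,d\sigma$ one reads off $\int_x^\infty f\,d\sigma=g(x)/\psi_r(x)$, while $\Pro_x(M_T\ge x)=1$ forces $\sigma([x,\infty))=1/\psi_r(x)$. Subtracting the corresponding quantities at $x$ and at $x^*$ and using $\hat f=f+\hat c$ yields
\[
w(x)=\frac{g(x)+\hat c}{\psi_r(x)}-\frac{g(x^*)+\hat c}{\psi_r(x^*)},
\]
so that $w(x)\le 0$ says precisely that the map $y\mapsto (g(y)+\hat c)/\psi_r(y)$ does not exceed its value at $x^*$; equivalently, that the graph of $g$ lies below a suitable $\psi_r$-chord. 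The only remaining unknown is the constant $\hat c$, and pinning it down is the crux of the argument: combining the harmonicity Lemma~\ref{lem:v_c}(iii), which for a diffusion (no positive jumps) reads $\hat v(x)=\hat v(x^*)\,\psi_r(x)/\psi_r(x^*)$ for $x\le x^*$, with the value matching Lemma~\ref{lem:v_c}\eqref{item:Mv}, $\hat v(x^*)=g(x^*)+\hat c$, and evaluating at $x=0$ together with the defining relation $\hat c=\hat v(0)$ from Lemma~\ref{lem:v_c}(v), determines $\hat c$ explicitly from the boundary data $g(0),g(x^*),\psi_r(0),\psi_r(x^*)$.

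Substituting this value of $\hat c$ into the displayed expression for $w$ and clearing the positive denominator $\psi_r(x^*)-\psi_r(0)$ should collapse the condition $w(x)\le 0$ into \eqref{eq:5diff}; since each reduction above is an equivalence, this proves both implications at once. The geometric reading is that \eqref{eq:5diff} asks the graph of $g$ to stay below the chord joining $(\psi_r(0),g(0))$ and $(\psi_r(x^*),g(x^*))$ in the $\psi_r$-scale, which is the quantity entering the quasi-variational inequalities of \cite{Alvarez04}. I expect the main obstacle to be precisely the explicit computation of $\hat c$ and the verification that, after substitution, the two fractions recombine exactly into the right-hand side of \eqref{eq:5diff}; a subtler bookkeeping point is to identify which two boundary points the relevant chord actually passes through, i.e.\ to match the constants so that \eqref{eq:5diff} holds with equality at both $x=0$ and $x=x^*$, consistently with the identity $z(0)=g(0)$ recorded in Remark~\ref{rem:Mv_v}.
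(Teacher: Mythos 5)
Your strategy is structurally the same as the paper's: use Remark \ref{rem:Mv_v} and the factorisation $\Pro_x(M_T\in dy)=\psi_r(x)\sigma(dy)$ to turn Assumption \ref{eq:Mv_v} into an inequality that is linear in $\psi_r(x)$ on $[0,x^*]$, then eliminate the constants $\hat c$, $x^*$. Your intermediate steps are correct: the reduction to $w(x)\le 0$ on $[0,x^*]$, the formula
\[
w(x)=\frac{g(x)+\hat c}{\psi_r(x)}-\frac{g(x^*)+\hat c}{\psi_r(x^*)},
\]
and the normalisation $\sigma([x,\infty))=1/\psi_r(x)$ (which is the one actually consistent with the factorisation) are all fine.

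The gap is the final step, and it is exactly the ``subtler bookkeeping point'' you flagged but left unresolved: the substitution does \emph{not} collapse into \eqref{eq:5diff}. Your determination of $\hat c$ (fixed point $\hat c=\hat v(0)$, harmonicity, and $\hat v(x^*)=g(x^*)+\hat c$) gives $\hat c=\psi_r(0)g(x^*)/\bigl(\psi_r(x^*)-\psi_r(0)\bigr)$, with no $g(0)$ in it, and plugging this into $w(x)\le 0$ yields
\[
\bigl(\psi_r(x)-\psi_r(0)\bigr)g(x^*)\geq\bigl(\psi_r(x^*)-\psi_r(0)\bigr)g(x)\quad\mbox{for all }x\in[0,x^*],
\]
the chord through $(\psi_r(0),0)$ and $(\psi_r(x^*),g(x^*))$. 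By contrast, \eqref{eq:5diff} is equivalent to
\[
\bigl(\psi_r(x)-\psi_r(0)\bigr)g(x^*)+\bigl(\psi_r(x^*)-\psi_r(x)\bigr)g(0)\geq\bigl(\psi_r(x^*)-\psi_r(0)\bigr)g(x),
\]
the chord through $(\psi_r(0),g(0))$ and $(\psi_r(x^*),g(x^*))$; the extra term is $\le 0$, so \eqref{eq:5diff} is strictly stronger unless $g(0)=0$. The source of the divergence is how $\hat c$ is pinned down: the paper's proof imposes equality in $\psi_r(x)\Delta\ge g(x)+\hat c$ (with $\Delta=\int_{x^*}^\infty\hat f\,d\sigma$) at \emph{both} $x=x^*$ and $x=0$, forcing $\hat c=-g(0)+\psi_r(0)\Delta$ and $\Delta=\bigl(g(x^*)-g(0)\bigr)/\bigl(\psi_r(x^*)-\psi_r(0)\bigr)$, whereas your fixed-point route forces $\hat c=\hat v(0)=\psi_r(0)\Delta$; the two clash whenever $g(0)<0$ (indeed, by \eqref{eq:rem:Mv_v} one has $z(0)=g(0)$, so equality at $x=0$ holds only if $g(0)=0$, a tension internal to the paper as well). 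Whichever convention one regards as the intended one, your argument as written establishes the equivalence of Assumption \ref{eq:Mv_v} with the $g(0)$-free inequality above, not with \eqref{eq:5diff}, so it does not prove the proposition as stated except in the case $g(0)=0$.
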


\begin{proof}
Recalling Remark \ref{rem:Mv_v}, we see that Assumption \ref{eq:Mv_v} is equivalent to
\begin{align}\label{eq:5diff2}
\E_x\left(\hat f(M_T);{M_T\geq x^*}\right)-g(x)-\hat c\geq 0.
\end{align}
As above, we use the representation of the distribution of $M_T$ from \cite{BS}, p. 26:
\[\Pro_x(M_T\in dz)=\psi_r(x)\sigma(dz).\]
Writing
\[\Delta=\int_{x^*}^\infty \hat f(z)\sigma(dz),\]
the inequality \eqref{eq:5diff2} for $x\leq x^*$ reads as $\psi_r(x)\Delta\geq g(x)+\hat c.$ Recalling that $\hat c$ and $x^*$ were chosen so that we have equality for $x=x^*$ and $x=0$, a short calculation yields that $c=-g(0)+\psi_r(0)\Delta$ and 
\[\Delta=\frac{g(x^*)-g(0)}{\psi_r(x^*)-\psi_r(0)}.\]
Inserting this and rearranging terms, we obtain that $\psi_r(x)\Delta\geq g(x)+\hat c$ holds if and only if
\begin{align*}
\psi_r(x)\frac{g(x^*)-g(0)}{\psi_r(x^*)-\psi_r(0)}\geq \frac{\psi_r(0)g(x^*)-\psi_r(x^*)g(0)}{\psi_r(x^*)-\psi_r(0)}+g(x).
\end{align*}\end{proof}
%
We remark that analytical conditions on $g$ and $X$ can be found to guarantee that \eqref{eq:5diff} holds true. This was carried out in \cite{Alvarez04}, Lemma 5.2, where three such conditions are given. (Note that in the setting discussed there, it is assumed that $g(0)< 0$, while we consider also the case $g(0)= 0$.)


\section{Examples}\label{sec:example}
\subsection{Power reward for geometric L\'evy processes}\label{subsec:geo_levy_power}
We consider an extension of the problem treated in \cite{Alvarez04}, Section 6.1, where the case of a geometric Brownian motion and a power reward function was studied. More generally, we consider a general L\'evy process $X$, which we assume not to be a subordinator, and reward function $g(x)=e^{bx}-k$ for some $b>0,\;k>1$. We assume the integrability condition $\E_0(e^{bX_1})<{e}^r$ to hold true. Using the systematic approach from \cite{CST}, or just by guessing, we see that the function
\[f(x)=ae^{bx}-k,\]
where $a={1}/{\E_0e^{bM_T}}$, fulfills Assumption \ref{ass:f} with\ $\un x=0$. Recalling Proposition \ref{prop:un_x_0}, we see that the assumptions of Theorem \ref{thm:nondegenerated} are fulfilled. For $c>0$, the equation $f(x)=-c$ has the unique solution 
\begin{align}x_c=\frac{\log((k-c)/a)}{b}\label{eq:x_c_geometric}
\end{align}
and the optimal value $\hat c$ is given by the equation
\begin{align}
\hat c&\stackrel{}{=} \hat{v}_{\hat c}(0)=\E_0\left(\hat f_{\hat c}(M_T);{M_T\geq x_{\hat c}}\right)\label{eq:hat_c_mixed}\\
&=\int_{x_{\hat c}}^\infty\big(ae^{by}+{\hat c}\big)\Pro_0(M_T\in dy).\nonumber
\end{align}
How explicit this equation can be solved now depends on the distribution of $M_T$. For example, in the case that $X$ has arbitrary downward jumps, a Brownian component, and mixed exponential upward jumps, it is known that $M_T$ has a Lebesgue density of the form 
\[\sum_{k=1}^{n+1} \zeta_k\eta_ke^{-\eta_ky}\mbox{ for }y>0,\]
where $\eta_{1},...,\eta_{n+1}$  and $\zeta_1,...,\zeta_{n+1}$ are positive constants (see \cite{Mordecki02}). In this case, \eqref{eq:hat_c_mixed} reads as
\[\hat{c}=\sum_{k=1}^{n+1}\zeta_k\left(a\frac{\eta_k}{b-\eta_k}e^{(b-\eta_k)x_{\hat c}}+\hat c e^{-\eta_kx_{\hat c}}\right),\]
which may be solved numerically for $\hat c$. {This also yields a value for the optimal boundary $x^*$.}
Summarizing the results, we obtain
 \begin{proposition}
Define $S^*=(\tau_n^*,\gamma_n^*)_{n=1}^\infty$ by setting for all $n=1,2,...$
\[\gamma_n^*=0,~~\tau_n^*:=\inf\{t\geq \tau_{n-1}^*:X_t\geq x^*\},~~\tau_0=0.\] Then the sequence $S^*$ is an optimal impulse control sequence for the power reward impulse control problem for geometric L\'evy processes. The value function is
\[v(x)=\E_x\big(ae^{bM_T}-k+\hat{c};\ M_T\geq x^*\big)\]
with $\hat{c}$ given by \eqref{eq:hat_c_mixed} and $x^*=x_{\hat c}$ given by \eqref{eq:x_c_geometric}.
\end{proposition}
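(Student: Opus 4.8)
The plan is to obtain this Proposition as a direct application of Theorem~\ref{thm:nondegenerated}: for the concrete data $g(x)=e^{bx}-k$ and $f(x)=ae^{bx}-k$ I would verify that Assumptions~\ref{ass:f} and~\ref{eq:Mv_v} hold with $\un x=0$, after which the theorem immediately delivers the optimality of the threshold sequence $S^*$ together with the representation $v(x)=\E_x(\hat f(M_T);\ M_T\geq x^*)$, where $\hat f=f+\hat c=ae^{bx}-k+\hat c$. This is precisely the asserted value function, and the explicit forms of the constants are then read off as \eqref{eq:x_c_geometric} and \eqref{eq:hat_c_mixed}. Thus the whole argument reduces to checking the two assumptions and identifying the constants.

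First I would establish the representation $g(x)=\E_x(f(M_T))$ and the well-definedness of $a$. By spatial homogeneity of the L\'evy process, under $\Pro_x$ the running maximum equals $x+M_T$ in distribution relative to $\Pro_0$, so $\E_x(e^{bM_T})=e^{bx}\E_0(e^{bM_T})$; with $a=1/\E_0(e^{bM_T})$ this gives $\E_x(f(M_T))=e^{bx}-k=g(x)$ for all $x\geq0$. The point that needs care is the finiteness of $\E_0(e^{bM_T})$. Writing $\Psi$ for the Laplace exponent ($\E_0(e^{\lambda X_t})=e^{t\Psi(\lambda)}$), the integrability condition $\E_0(e^{bX_1})<e^r$ means $\Psi(b)<r$, whence $\E_0(e^{bX_T})=r/(r-\Psi(b))<\infty$; by the Wiener--Hopf factorization (see \cite{kyp}) the variables $M_T$ and $X_T-M_T\leq0$ are independent with $\E_0(e^{bX_T})=\E_0(e^{bM_T})\,\E_0(e^{b(X_T-M_T)})$, and since the second factor lies in $(0,1]$ we conclude $\E_0(e^{bM_T})<\infty$ and $a\in(0,1]$.

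Next I would verify Assumption~\ref{ass:f}. As $a>0,\ b>0$, the function $f$ is continuous and strictly increasing on $[0,\infty)$, so its minimiser is $\un x=0$; moreover $f(0)=a-k\leq 1-k<0$ (using $a\leq1$ and $k>1$), $f(x)=ae^{bx}-k\to\infty$, and $g(0)=1-k<0$, so all four bullet points hold (the last being vacuous because $g(0)<0$). Since $\un x=0$ and $g(0)<0$, the second statement of Proposition~\ref{prop:un_x_0} applies and yields Assumption~\ref{eq:Mv_v} with no further work. It then remains to identify the constants: solving $f(x)=-c$ gives $x_c=\log((k-c)/a)/b$, which is \eqref{eq:x_c_geometric} and lies in $[0,\infty)$ for $c\in[0,c^*]$ with $c^*=k-a$, while $\hat c$ is the smallest solution of $\hat c=\hat v_{\hat c}(0)=\E_0(\hat f_{\hat c}(M_T);\ M_T\geq x_{\hat c})$, i.e. \eqref{eq:hat_c_mixed}, and $x^*=x_{\hat c}$.

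I expect the only genuine obstacle to be the finiteness step for $\E_0(e^{bM_T})$, that is, translating the moment condition $\E_0(e^{bX_1})<e^r$ into integrability of the exponential of the running maximum; everything afterwards is bookkeeping and direct appeals to Proposition~\ref{prop:un_x_0} and Theorem~\ref{thm:nondegenerated}. One small point I would still confirm is that the chosen $\hat c$ satisfies $\hat c<c^*$, so that $x^*=x_{\hat c}>0$ as required in Notation~\ref{not:def_vhat}: with $g(0)<0$ and $\un x=0$ the function $z(c)=c-\hat v_c(0)$ of Lemma~\ref{lem:v_c}(v) satisfies $z(c^*)=-g(0)=k-1>0$ strictly while $z(0)\leq 0$, forcing its smallest root strictly below $c^*$.
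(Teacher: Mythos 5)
Your proof is correct and takes essentially the same route as the paper: verify Assumption \ref{ass:f} for $f(x)=ae^{bx}-k$ with $\un x=0$, invoke Proposition \ref{prop:un_x_0} to obtain Assumption \ref{eq:Mv_v}, apply Theorem \ref{thm:nondegenerated}, and read off the constants from \eqref{eq:x_c_geometric} and \eqref{eq:hat_c_mixed}. The extra details you supply beyond the paper's brief discussion --- the Wiener--Hopf argument giving $\E_0(e^{bM_T})<\infty$ from $\E_0(e^{bX_1})<e^r$, and the check that $\hat c<c^*$ so that $x^*>0$ --- are accurate and only make the same argument more complete.
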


\subsection{Power reward for L\'evy processes}\label{subsec:levy_power}
Now, we consider the case $g(x)=x^m,\;m\in\N,\;m\geq 2$ for a L\'evy process $X$ which we assume not to be a subordinator, and the L\'evy measure $\pi$ satisfies 
\begin{equation}
\label{finite_exp}
\int_{(-\infty, -1)\cup(1, +\infty)}|y|^m\pi(dy)<\infty.
\end{equation}
This assumption implies that $X_T$ has the finite $m$-th moment.

We start with the case $m=1$. Then, it is clear that
\[g(x)=\E_x\big(Q_1(M_T)\big),\]
where $Q_1(x)=f(x)=x-\E_0(M_T)$ is the first Appell polynomial associated with $M_T$. For this function, the assumptions of Theorem \ref{thm:degenerated} are obviously fulfilled yielding the following:
\begin{proposition}
For a L\'evy process $X$ as above, the impulse control problem with reward function $g(x)=x$ has value function
\[v(x)=\E_x(\hat{f}(M_T);{M_T\geq0})=\begin{cases}
x+\E_0(M_T),&\;x\geq 0,\\
\E_x(M_T^+),&\;x<0,
\end{cases}\]
and the impulse control strategies $S_\epsilon$ given by
\[\tau_{n,\epsilon}=\inf\{t>\tau_{n-1,\epsilon}:X_t\geq \epsilon\},\;\tau_{0,\epsilon}=0,\]
are $\epsilon$-optimal.
\end{proposition}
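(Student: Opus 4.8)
The plan is to verify that the representing function $f=Q_1$ meets all hypotheses of Theorem~\ref{thm:degenerated} and then to read off both the value function and the $\epsilon$-optimality directly from that theorem. First I would record that $g(0)=0$ and that $f(x)=x-\E_0(M_T)$ is affine, hence non-decreasing and continuous at $x=0$, as required. The only nontrivial point in checking the hypotheses is the representation $g(x)=\E_x(f(M_T))$. Here I would invoke the spatial homogeneity of the L\'evy process: started from $x$, the running maximum $M_T$ has the same law as $x+M_T$ under $\Pro_0$, so that $\E_x(M_T)=x+\E_0(M_T)$ and therefore $\E_x(f(M_T))=\E_x(M_T)-\E_0(M_T)=x=g(x)$.

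For this argument to be meaningful one needs $\E_0(M_T)<\infty$, which is where the integrability condition \eqref{finite_exp} (taken with $m=1$) enters: it guarantees that $X_T$, and hence $M_T$, has a finite first moment, so that $Q_1$ is well defined and the expectations above are finite. This also ensures the representing function does not degenerate, the exclusion of subordinators keeping the running-maximum functional nontrivial.

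With the hypotheses in place, Theorem~\ref{thm:degenerated} applies and yields the formula $v(x)=\E_x(\hat f(M_T);\,M_T\geq 0)$ together with the $\epsilon$-optimality of the strategies $S_\epsilon$. It then remains to make this formula explicit. Since $f(0)=-\E_0(M_T)$, we have $\hat f(x)=f(x)-f(0)=x$ for $x\geq 0$ (and $\hat f(x)=0$ for $x<0$, by the convention adopted in the proof of Theorem~\ref{thm:degenerated}). For $x\geq 0$ the maximum satisfies $M_T\geq X_0=x\geq 0$ almost surely, so the indicator is identically one and $v(x)=\E_x(M_T)=x+\E_0(M_T)$. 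For $x<0$ one has $\hat f(M_T)1_{\{M_T\geq 0\}}=M_T^+$, giving $v(x)=\E_x(M_T^+)$, which is exactly the asserted piecewise expression.

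I expect no serious obstacle: the whole statement reduces to checking the representation and the requisite finiteness, and then substituting into the already-established Theorem~\ref{thm:degenerated}. The subtlest point is the justification of $\E_x(M_T)=x+\E_0(M_T)$ together with the finiteness of $\E_0(M_T)$; once these are secured, the remainder is a direct evaluation of the two cases.
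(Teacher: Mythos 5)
Your proof is correct and takes essentially the same route as the paper: the paper also identifies $f=Q_1(x)=x-\E_0(M_T)$ and simply notes that the hypotheses of Theorem~\ref{thm:degenerated} are ``obviously fulfilled,'' whereas you supply the details it leaves implicit (spatial homogeneity for the representation $\E_x(f(M_T))=x$, the role of \eqref{finite_exp}, and the piecewise evaluation of $\E_x(\hat f(M_T);M_T\geq 0)$). One small caveat: the step ``$X_T$, and hence $M_T$, has a finite first moment'' is slightly glib, since $M_T\geq X_T$ makes the implication non-trivial; finiteness of $\E_0(M_T)$ is instead a standard consequence of \eqref{finite_exp} (with $m=1$) via the Novikov--Shiryaev-type moment results for the supremum at an independent exponential time, which is the same level of rigor the paper itself adopts.
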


As the optimal strategy is degenerated for  L\'evy process $X$ and reward $g(x)=x$, this is not the case for other powers $g(x)=x^m,\;m\in\N,\;m\geq 2$. In the following lemma, we collect two well-known properties of the $m$-th Appell polynomial $Q_m$ associated with $M_T$, see \cite{NS,NS2} or  \cite{Sa}.

\begin{lemma}\label{lem:appell-poly}
For $m\in\N,\;m\geq 2$, the following holds true:
\begin{enumerate}[(i)]
  \item $Q_m$ fulfills Assumption \ref{ass:f}.
  \item $Q_m$ is decreasing on $[0,\un x]$
\end{enumerate}
\end{lemma}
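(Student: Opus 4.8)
The plan is to read off all the required properties from the two defining features of the Appell polynomials associated with $M_T$, namely the generating function identity $\sum_{m\ge 0}Q_m(x)z^m/m!=\e^{zx}/\E_0(\e^{zM_T})$ (well defined up to the order permitted by \eqref{finite_exp}, which guarantees $\E_0(M_T^m)<\infty$) together with the recursion $Q_m'=m\,Q_{m-1}$ and the mean-value property $\E_x(Q_m(M_T))=\E_0(Q_m(x+M_T))=x^m$; the last identity uses the spatial homogeneity of the L\'evy process $X$, see \cite{NS,NS2,Sa}. This already delivers several ingredients of Assumption \ref{ass:f} for free: $f=Q_m$ is continuous (a polynomial), it represents $g$ via $g(x)=x^m=\E_x(Q_m(M_T))$ with $g(0)=0\le 0$, and, being monic of degree $m$, $Q_m$ tends to $+\infty$, so $f(x)>0$ for $x$ large.

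The shape of $Q_m$ on $[0,\infty)$ I would obtain by induction on $m$ using $Q_m'=m\,Q_{m-1}$. The base case $m=1$ is explicit: $Q_1(x)=x-\E_0(M_T)$ is strictly increasing with its unique, strictly positive zero at $\E_0(M_T)$, which is positive because $X$, not being a subordinator, reaches positive values with positive probability. For the inductive step I would feed in the sign structure of $Q_{m-1}$, namely that it is negative to the left of its largest root $\ov x_{m-1}$ and positive to its right, so that the critical points of $Q_m$, being the zeros of $Q_{m-1}$, organize into a single decreasing--increasing pattern: $Q_m'=m\,Q_{m-1}<0$ on $[0,\ov x_{m-1})$ and $Q_m'>0$ on $(\ov x_{m-1},\infty)$. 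Thus $Q_m$ is strictly decreasing on $[0,\un x]$ and strictly increasing on $[\un x,\infty)$ with global minimum at $\un x:=\ov x_{m-1}>0$. This is at once claim (ii) and the monotonicity part of Assumption \ref{ass:f}, and it forces $\un x>0$, which is exactly what is required when $g(0)=0$.

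It then remains to check that the minimal value is strictly negative, i.e. $f(\un x)=Q_m(\un x)<0$. Here I would invoke the mean-value property at $x=0$, giving $\E_0(Q_m(M_T))=0$. Since $M_T\ge 0$ and its law is non-degenerate (again because $X$ is not a subordinator), $Q_m$ cannot be nonnegative on the support of $M_T$; as the minimum of $Q_m$ over $[0,\infty)$ is attained at $\un x$, this yields $Q_m(\un x)<0$. Combining the three steps verifies Assumption \ref{ass:f} and establishes (ii).

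The step I expect to be the main obstacle is the sign and root structure of $Q_{m-1}$ feeding the induction, concretely that $Q_m$ has no spurious local minimum on $[0,\un x)$, equivalently that $Q_m(0)\le 0$ propagates through the recursion. This is where the genuinely probabilistic input enters: it reflects that the supremum $M_T$ of a L\'evy process taken at an independent exponential time is always at least as dispersed as an exponential law, its ladder structure making it a geometric-type compound, which is precisely the content established for these maximal Appell polynomials in \cite{NS,NS2,Sa}. I would therefore quote the monotonicity and sign statements from those references for this point and devote the write-up to assembling them into the form demanded by Assumption \ref{ass:f}.
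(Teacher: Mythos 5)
Your proposal is correct, and in substance it ends where the paper ends: the paper offers no proof of this lemma at all, stating it as a collection of well-known facts with a pointer to \cite{NS,NS2} and \cite{Sa} --- exactly the references you fall back on for your one non-elementary step. What you do differently is reconstruct everything elementary from the recursion $Q_m'=mQ_{m-1}$ and the mean-value property $\E_x\big(Q_m(M_T)\big)=x^m$: continuity, the representation with $g(0)=0\le 0$, positivity for large $x$, the decreasing/increasing pattern with $\un x=\ov x_{m-1}>0$ (which settles claim (ii) and the case $g(0)=0$ of Assumption \ref{ass:f}), and $Q_m(\un x)<0$ from $\E_0\big(Q_m(M_T)\big)=0$ together with the fact that a polynomial cannot vanish identically on the unbounded support of $M_T$. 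This buys something the paper's bare citation does not: it isolates precisely the irreducible probabilistic input, namely that $Q_{m-1}$ has no sign change before its largest root, equivalently that nonpositivity at $0$ propagates through the recursion. You are right that this step cannot follow from the recursion and mean-value property alone: for a deterministic $Y\equiv 1$ the associated Appell polynomial is $Q_2(x)=(x-1)^2\ge 0$, so the sign structure genuinely uses that $M_T$ is the exponential-time supremum of a L\'evy process (by Wiener--Hopf it is a ladder-height subordinator evaluated at an independent exponential time, whence for instance $\E_0(M_T^2)\ge 2\big(\E_0 (M_T)\big)^2$, i.e.\ $Q_2(0)\le 0$, with equality in the spectrally negative case); this is exactly what the cited references establish. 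Two minor corrections to your write-up: ``not a subordinator'' does not by itself give $\E_0(M_T)>0$, since the negative of a subordinator is not a subordinator yet has $M_T\equiv 0$ under $\Pro_0$ --- what is needed (and implicitly assumed in this section) is that $X$ is not a.s.\ nonincreasing; and since $Q_m(0)=0$ in the spectrally negative case, ``negative to the left of the largest root'' must be read as nonpositive there (strict in the interior), which does not affect your induction.
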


 Using this lemma, we may apply Theorem \ref{thm:nondegenerated}  to obtain the following solution for the power reward problem for general L\'evy processes:
 
 \begin{proposition}
Let $m\in\N,\;m\geq2,$ and let Assumption \ref{eq:Mv_v} hold true.
Define a sequence $S^*=(\tau_n^*,\gamma_n^*)$ by $\tau_0^*:=0,$ 
\[\tau_n^*:=\inf\{t\geq \tau_{n-1}^*:X_t\geq x^*\}\mbox{ for all }n\in\N,\]
and $\gamma_n^*=0$. Then the sequence $S^*$ is an optimal impulse control sequence for the power reward impulse control problem. The value function is given by
\[v(x)=\E_x(Q_m(M_T)+\hat{c};\ M_T\geq x^*)\]
with $\hat{c}$ as in Notation \ref{not:def_vhat}.
\end{proposition}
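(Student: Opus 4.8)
The plan is to recognize this proposition as a direct corollary of Theorem~\ref{thm:nondegenerated}, once the representation~\eqref{eq:repr_sup} has been put in place with $f=Q_m$. Since $X$ is a L\'evy process, spatial homogeneity gives that $M_T$ under $\Pro_x$ has the same law as $x+M_T$ under $\Pro_0$. The integrability assumption~\eqref{finite_exp} guarantees that $X_T$, and hence $M_T$, has a finite $m$-th moment, so the Appell polynomials $Q_0,\dots,Q_m$ associated with $M_T$ are well-defined. By the characterizing property of Appell polynomials, namely $\E_0(Q_m(x+M_T))=x^m$, one obtains
\[
\E_x(Q_m(M_T))=\E_0(Q_m(x+M_T))=x^m=g(x),
\]
so that $f:=Q_m$ furnishes the representation required in Assumption~\ref{ass:f}.

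First I would invoke Lemma~\ref{lem:appell-poly}(i) to conclude that $f=Q_m$ satisfies all the structural requirements of Assumption~\ref{ass:f}: the existence of the global minimum point $\un x$, strict monotonicity on $[\un x,\infty)$, positivity for large $x$, and the sign conditions on $f(\un x)$ and $g(0)$. With Assumption~\ref{ass:f} in hand, Lemma~\ref{lem:v_c}(v) produces the constant $\hat c\in(0,c^*]$ and Notation~\ref{not:def_vhat} fixes the threshold $x^*=x_{\hat c}>0$ together with the shifted function $\hat f=Q_m+\hat c$.

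Next, with Assumption~\ref{eq:Mv_v} taken as a hypothesis, both standing assumptions of Theorem~\ref{thm:nondegenerated} are met, so I would apply that theorem verbatim. It yields that the threshold strategy $S^*$, which restarts the process at $0$ each time it reaches level $x^*$, is optimal, and that
\[
v(x)=\hat v(x)=\E_x(\hat f(M_T);\ M_T\geq x^*).
\]
Substituting $\hat f=Q_m+\hat c$ then gives precisely $v(x)=\E_x(Q_m(M_T)+\hat c;\ M_T\geq x^*)$, as claimed.

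I expect no genuine obstacle here: the real work was already carried out in Lemma~\ref{lem:appell-poly}, which verifies the delicate shape properties of $Q_m$, and in Theorem~\ref{thm:nondegenerated}, whose verification-theorem argument supplies both inequalities $\hat v\leq v$ and $\hat v\geq v$. The only point requiring a line of care is confirming the representation $g=\E_\cdot(Q_m(M_T))$ through the Appell identity together with the moment condition~\eqref{finite_exp}; everything else is a matter of quoting the two preceding results. One might also remark that Assumption~\ref{eq:Mv_v}, left standing here, can in concrete cases be checked via Proposition~\ref{prop:density_decreasing} or Proposition~\ref{prop:suff_cond_diff}, using Lemma~\ref{lem:appell-poly}(ii), which asserts that $Q_m$ is decreasing on $[0,\un x]$.
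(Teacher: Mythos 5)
Your proposal is correct and matches the paper's own argument: the paper likewise treats this proposition as an immediate consequence of Lemma \ref{lem:appell-poly}(i) (which gives Assumption \ref{ass:f} for $f=Q_m$) combined with Theorem \ref{thm:nondegenerated} under the hypothesized Assumption \ref{eq:Mv_v}, with the representation $g(x)=\E_x(Q_m(M_T))$ supplied by the Appell-polynomial property under the moment condition \eqref{finite_exp}. Your additional remarks on checking Assumption \ref{eq:Mv_v} via Propositions \ref{prop:density_decreasing} and \ref{prop:suff_cond_diff} correspond exactly to the remark the paper places after the proposition.
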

\begin{remark}
Whenever the distribution of $M_T$ is given by a possible atom at 0 and a Lebesgue density $b$ on $(0,\infty)$, 
which is {non-increasing}, then the conclusion of the previous proposition holds true by Proposition \ref{prop:density_decreasing}.
\end{remark}

%

To obtain more explicit results, we now concentrate on the case of spectrally negative L\'evy processes{, including, e.g., all Brownian motions with drift.
For spectrally negative L\'evy processes}, it is well-known that $M_T$ is Exp($\theta$)-distributed, where $\theta$ denotes the unique root of the equation $\Psi(\lambda)=r$ and $\Psi$ denotes the L\'evy exponent of $X$, see \cite{kyp}, p. 213. 
From \cite{Sa}, Section 2.2, we know that the $m$-th Appell polynomial $f=Q_m$ is given by 
\[f(x)=\left(x-\frac{m}{\theta}\right)x^{m-1},\]
and $x_c$ is therefore given as the unique positive root of
\[\left(x-\frac{m}{\theta}\right)x^{m-1}=-c\]
for all $c\in[0,c^*]$. In particular, in the case\ $m=2$, we have
\[x_c=(1+\sqrt{1-\theta^2 c})/\theta.\]
Recalling that $M_T$ is Exp($\theta$)-distributed and the process does not jump upwards, a short calculation yields that the condition ${\hat{c}}=\hat{v}_{\hat{c}}(0)$ becomes
\begin{align}
{\hat{c}}&=\E_0\left((f+\hat{c})(M_T);M_T\geq x_{\hat{c}}\right)=(g(x_{\hat{c}})+\hat{c})e^{-\theta x_{\hat{c}}}\label{eq:appel_m}\\
&=(x_{\hat{c}}^m+\hat{c})e^{-\theta x_{\hat{c}}}\nonumber
\end{align}
and the value function is given by
\begin{align}\label{eq:value_power_spectr}
v(x)=\begin{cases}
({x^*}^m+\hat{c})e^{-\theta (x^*-x)},&x<x^*,\\
\hat c+x^m,&x\geq x^*.
\end{cases}
\end{align}
For $m=2$, the unique solution $c$ of Equation \eqref{eq:appel_m} is $\hat{c}=\theta^{-2}w$, where $w$
is the unique solution to the equation
$$
2(1+\sqrt{1-y})=y\,\e^{1+\sqrt{1-y}},\quad 0<y<1.
$$
This equation can be solved explicitly in terms of the Lambert W-function, see \cite{CGHJK}:
\[w=-\LambertW(-2\e^{-2})^2-2\LambertW(-2\e^{-2}).\]
Now, the optimal strategy is given by the threshold
\[x^*=(1+\sqrt{1-w})/\theta.\]
{
It is easily checked that the value function $v$ in \eqref{eq:value_power_spectr} has the smooth fit property in\ $x^*$, see also Figure \ref{fig:v3}. As the restarting state $0$ is fixed, we of course cannot expect to have a smooth fit there, but the value function intersects the reward by construction.
}

\begin{figure}
\begin{center}
\includegraphics[height=6cm]{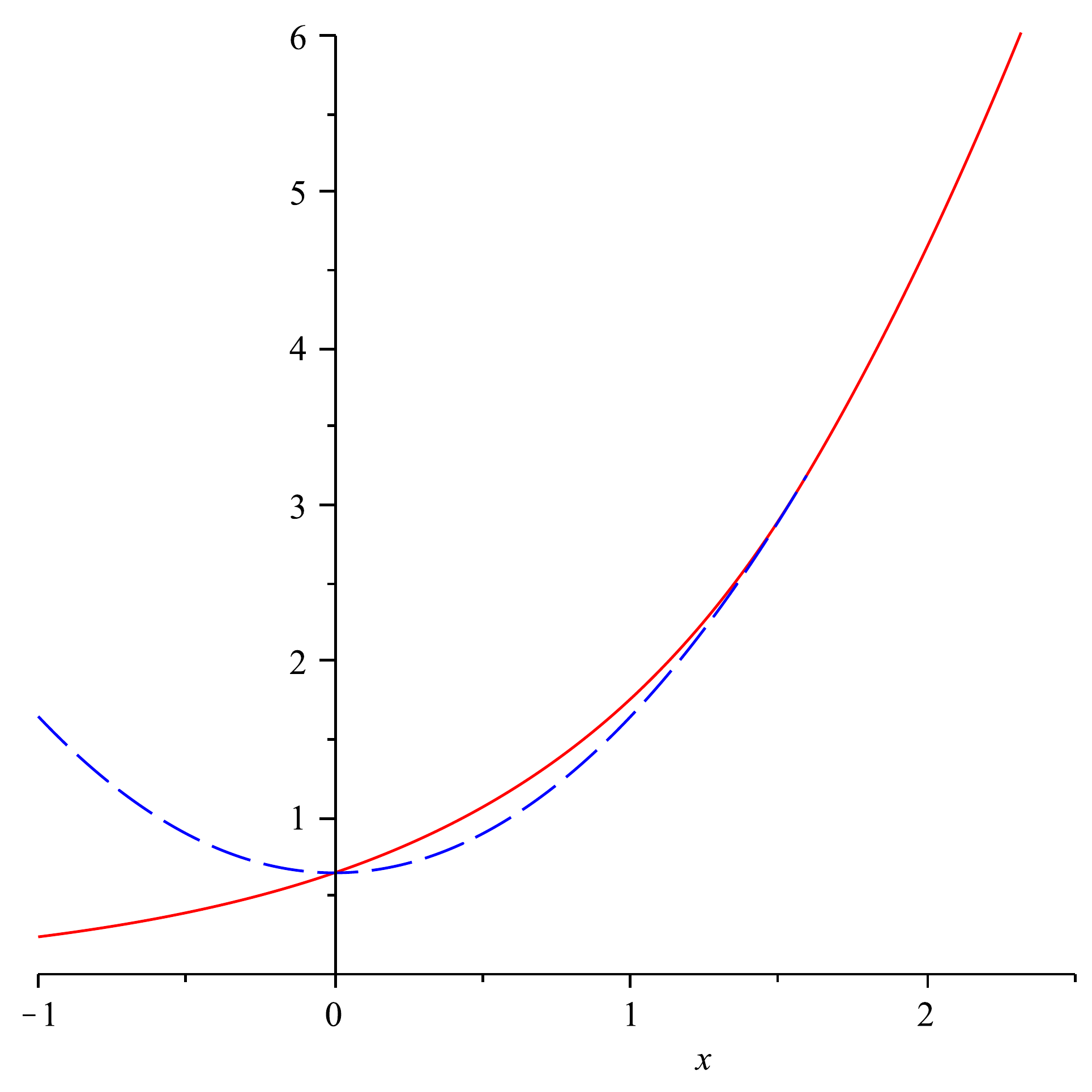}
\caption{Value function in the case $g(x)=x^2$ for a standard Brownian motion $X$ and $r=1/2$. Here,\ $x^*\approx 1.59$.}\label{fig:v3}
\end{center}
\end{figure}

\subsection{Power reward for reflected Brownian motion}
We consider the power reward function $g(x)=x^m,\;m\in\N,$ for a Brownian motion reflected in 0. For the case $m\geq 2$, using the method described in \cite{CST}, Subsection 2.2, and the explicit expressions from \cite{BS}, A1.2, it is straightforward to find the representing function $f=f_m$ as
\[f_m(x)=x^{m-1}\big(x-\frac{m}{\sqrt{2r}}\coth(x\sqrt{2r})\big),\]
which indeed fulfills 
\[x^m=\E_x\big(f_m(M_T)\big)\mbox{ for all }x\geq 0.\] 
The case $m=1$ has to handled with more care, which is due to a local time term arising in this case. Nonetheless, it turns out that the same representation holds also in the case $m=1$ for $x>0$, i.e.
\[x=\E_x\big(f_1(M_T)\big)\mbox{ for all }x> 0.\] 
For a detailed treatment, see \cite{Bao_thesis}.

The impulse control problem can be solved by curve sketching of $f_m$. We start with the case $m=1$. Here, $f_1$ is increasing and $\lim_{x\rightarrow0}f_1(x)=-\infty$. Therefore, Theorem \ref{prop:degenerated_infinity} is applicable and we see that the value is infinite. \\
For $m=2$, $f_2$ is also increasing, with $f_2(0):=\lim_{x\rightarrow 0}f_2(x)=-1/r$. Therefore, we are in the degenerated case with finite value. Theorem \ref{thm:degenerated} yields $\epsilon$-optimal strategies and
\[v(x)=x^2+v(0)=x^2+1/r\mbox{ for all }x\geq0.\]
For $m\geq 3$, we obtain an optimal non-degenerated impulse-control strategy. First, $x_c$ is the unique positive root of
\[x^{m-1}\big(x-\frac{m}{\sqrt{2r}}\coth(x\sqrt{2r})\big)=-c\]
for $c\in[0,c^*]$. From \cite{BS}, p. 411, the distribution of $M_T$ is given by
\[\Pro_x(M_T\in dy)=\sqrt{2r}\frac{\sinh(y\sqrt{2r})\cosh(x\sqrt{2r})}{\cosh(y\sqrt{2r})^2}dy.\]
We calculate
\begin{align*}
&\E_0\left((f+c)(M_T);M_t\geq x_c\right)\\
&=\int_{x_c}^\infty \big(y^{m-1}\big(y-\frac{m}{\sqrt{2r}}\coth(y\sqrt{2r})\big)+c)\sqrt{2r}\frac{\sinh(y\sqrt{2r})}{\cosh(y\sqrt{2r})^2}dy\\
&=-\left[\frac{2\e^{y\sqrt{2r}}(c+y^m)}{\e^{2y\sqrt{2r}}+1}\right]_{x_c}^\infty
=\frac{2\e^{x_c\sqrt{2r}}(c+x_c^m)}{\e^{2x_c\sqrt{2r}}+1}.
\end{align*}
Therefore, the condition $c=\hat{v}_c(0)$ reads as
\[c=\frac{2\e^{x_c\sqrt{2r}}(c+x_c^m)}{\e^{2x_c\sqrt{2r}}+1},\]
which may be solved numerically to obtain $\hat{c}$ and $x_{\hat{c}}=x^*$. No matter what the exact value of $\hat{c}$ is, the assumption of Proposition \ref{prop:suff_cond_diff} are fulfilled, so that Theorem \ref{thm:nondegenerated} can be applied, which yields the optimal impulse control strategy with $x^*$ as a threshold.
\bibliographystyle{abbrvnat}
\bibliography{literatur}

\end{document}